\newtheorem{theorem}{Theorem}[section]
\newtheorem{proposition}[theorem]{Proposition}
\newtheorem{lemma}[theorem]{Lemma}
\newtheorem{corollary}[theorem]{Corollary}
\newtheorem{remark}[theorem]{Remark}
\newcommand{\hlgy}[2]{\ensuremath{H_{#1}(#2)}}
\newcommand{\rhlgy}[2]{\ensuremath{\widetilde{H}_{#1}(#2)}}
\newcounter{bean}
\newenvironment{romanlist}{\begin{list}{\rm ({\roman{bean}})}
      {\usecounter{bean}\setlength{\rightmargin}{\leftmargin}}}
      {\end{list}}
\newenvironment{my_enumerate}
{\begin{enumerate}
  \setlength{\itemsep}{0pt}
  \setlength{\parskip}{0pt}
  \setlength{\parsep}{0pt}
  \setlength{\parindent}{0pt}}
{\end{enumerate}}
\newcommand{\inclusion}[2]{\ensuremath{#1
 {\hookrightarrow}#2}}
\newcommand{\mapa}[2]{\ensuremath{#1
 {\longrightarrow}#2}}
\newcommand{\map}[3]{\ensuremath{#1\colon#2
 {\longrightarrow}#3}}
\newcommand{\seqm}[3]{\ensuremath{#1\stackrel{#2}
 {\longrightarrow}#3}}
\newcommand{\seqmm}[5]{\ensuremath{#1\stackrel{#2}
 {\longrightarrow}#3\stackrel{#4}{\longrightarrow}#5}}
\newcommand{\seqmmm}[7]{\ensuremath{#1\stackrel{#2}
 {\longrightarrow}#3\stackrel{#4}{\longrightarrow}#5
  \stackrel{#6}{\longrightarrow}#7}}
\newcommand{\floor}[1]{\ensuremath{\left\lfloor #1 \right\rfloor}}
\newcommand{\paren}[1]{\ensuremath{\left\{ #1 \right\}}}
\newcommand{\qqed}{\hfill\square}
\newcommand{\zmodtwo}{\ensuremath{\mathbb{Z}_{2}}}
\newcommand{\plocal}{\ensuremath{\mathbb{Z}_{(p)}}}
\begin{document}


\title{Homotopy Decompositions of Looped Stiefel manifolds, and their Exponents}
\author{Piotr Beben}
\address{\scriptsize{Department of Mathematics, National University of Singapore
Block S17 (SOC1),
10, Lower Kent Ridge Road,
Singapore 119076}} 
\email{matbpd@nus.edu.sg}

\date{}
\keywords{}

\begin{abstract}
Let $p$ be an odd prime, and fix integers $m$ and $n$ such that $0<m<n\leq (p-1)(p-2)$. We give a $p$-local homotopy decomposition for the loop space of the complex Stiefel manifold $W_{n,m}$. Similar decompositions are given for the loop space of the real and symplectic Stiefel manifolds. As an application of these decompositions, we compute upper bounds for the $p$-exponent of $W_{n,m}$. Upper bounds for $p$-exponents in the stable range $2m<n$ and $0<m\leq (p-1)(p-2)$ are computed as well.

\end{abstract}

\maketitle

\section{Introduction}

Fix $p$ to be an odd prime. Throughout this paper we assume that all spaces have been localized at $p$, and we set $q=2(p-1)$. When a reference to the cell structure of a space is made, we will be referring to a given $p$-local cell structure. For $p$-localizations of $CW$-complexes in particular, it will be implicit that the $p$-local cell structure being used is the one induced by localizing.

We shall use the term \textit{fibration} to refer to both homotopy fibrations and fibrations in the strict sense. When stating the homology (or cohomology) of a space without specifying the coefficients, this will be taken to mean that the statement holds for both $\mathbb{Z}_{p}$-homology and $\plocal$-homology.      

Let $W_{n,m}$ be the \textit{complex Stiefel manifold}, the group quotient $SU(n)/SU(n-m)$. Our first theorem provides a nontrivial homotopy decomposition for the loop space of low rank Stiefel manifolds.

\begin{theorem}
\label{MAIN1}
Fix integers $n$ and $m$ such that $0<m<n\leq (p-1)(p-2)$. Then there exists a product decomposition
$$\Omega W_{n,m}\simeq \displaystyle\prod_{1\leq i\leq p-1} \Omega D_{i}$$
\noindent such that for each $1\leq i\leq (p-1)$, $D_{i}$ is an $H$-space whose homology is the exterior algebra
$$\hlgy{*}{D_{i}}\cong\Lambda(y_{2(n-m+i)-1},y_{2(n-m+i)-1+q},...,y_{2(n-m+i)-1+k_{i}q}),$$
\noindent where $k_{i}$ is the largest integer such that $2(n-m+i)-1+k_{i}q\leq 2n-1$.

\end{theorem}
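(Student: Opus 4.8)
The plan is to realize the grading of the cohomology of $W_{n,m}$ by degree modulo $q$ geometrically, first on a small ``quasi-projective'' model and then on $W_{n,m}$ itself via its reduced-product structure.

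\emph{Step 1 (cohomology and the quasi-projective space).} Recall that $\cohlgy{*}{W_{n,m}}$ is the exterior algebra $\Lambda(x_{2(n-m)+1},x_{2(n-m)+3},\dots,x_{2n-1})$, and that there is the stunted quasi-projective space $Q=Q_{n,m}$, namely the $p$-localization of $\Sigma(\mathbb{C}P^{n-1}/\mathbb{C}P^{n-m-1})$, which has exactly one cell in each dimension $2j-1$ for $n-m+1\leq j\leq n$, together with the standard map $\iota\colon Q\to W_{n,m}$ carrying the $(2j-1)$-cell onto the exterior generator $x_{2j-1}$. The Steenrod operations on $\cohlgy{*}{Q}$ are inherited from those on $\cohlgy{*}{\mathbb{C}P^{\infty}}$; in particular $P^{1}$ raises degree by $q$, there are no Bocksteins, and since $2n-1\leq 2(p-1)(p-2)-1<pq$ the operations $P^{p^{k}}$ with $k\geq 1$ act trivially on $\cohlgy{*}{Q}$. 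Hence the whole Steenrod algebra preserves the grading of $\cohlgy{*}{Q}$ by the residue of the degree modulo $q$, and $\cohlgy{*}{Q}$ splits accordingly into $p-1$ summands as a module over the Steenrod algebra.

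\emph{Step 2 (splitting $Q$).} The crux is that, in the range $n\leq(p-1)(p-2)$, every attaching map of $Q$ joins two cells whose dimensions differ by a multiple of $q$. Indeed, the cell dimensions of $Q$ span an interval of length $2m-2\leq 2(p-1)(p-2)-2$, and one checks that the only nontrivial $p$-primary classes that can contribute to attaching maps in this range are the $\alpha$-family elements $\alpha_{1},\alpha_{2},\dots$, each $\alpha_{i}$ lying in the $(iq-1)$-stem, so that an $\alpha_{i}$-attachment joins a cell in dimension $d$ to one in dimension $d-iq$; the first class in a ``cross-residue'' stem occurs above dimension $2n-1$. Consequently the subcomplex $P_{i}\subseteq Q$ spanned by the cells of dimension $\equiv 2(n-m+i)-1\pmod{q}$ is a genuine subcomplex for each $1\leq i\leq p-1$, and $Q\simeq\bigvee_{1\leq i\leq p-1}P_{i}$, where $P_{i}$ carries cells in dimensions $2(n-m+i)-1,\ 2(n-m+i)-1+q,\ \dots,\ 2(n-m+i)-1+k_{i}q$ (and $P_{i}\simeq *$ when $i>m$). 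I expect this to be the main obstacle: one must run the obstruction-theoretic argument that the residue-class decomposition of the cell structure is honest, i.e.\ that the relevant function-space homotopy groups, controlling the attaching maps of $Q$, vanish in the stated range.

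\emph{Step 3 (assembling the loop decomposition).} Using the presentation of $W_{n,m}$ as the reduced product over the quasi-projective space $Q$ (its cells being the strictly increasing monomials in the cells of $Q$, realizing the exterior algebra, with the reduced-product multiplication giving it enough structure to extend $\iota$), set $D_{i}$ to be the reduced product over $P_{i}$. Then $D_{i}$ is an $H$-space, $\hlgy{*}{D_{i}}\cong\Lambda(y_{2(n-m+i)-1},y_{2(n-m+i)-1+q},\dots,y_{2(n-m+i)-1+k_{i}q})$, and $P_{i}\hookrightarrow Q$ induces a map $f_{i}\colon D_{i}\to W_{n,m}$ with $f_{i}^{*}(x_{2j-1})$ a generator of $\hlgy{*}{D_{i}}$ when $2j-1$ lies in the $i$-th string and zero otherwise. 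Now form the composite
$$\prod_{1\leq i\leq p-1}\Omega D_{i}\ \stackrel{\prod\Omega f_{i}}{\longrightarrow}\ \prod_{1\leq i\leq p-1}\Omega W_{n,m}\ \stackrel{\mu}{\longrightarrow}\ \Omega W_{n,m},$$
where $\mu$ is the iterated loop multiplication (well defined up to homotopy by homotopy associativity). A Serre spectral sequence argument for the path-loop fibrations shows that in the present range $\hlgy{*}{\Omega W_{n,m}}$ and each $\hlgy{*}{\Omega D_{i}}$ are polynomial algebras on classes in degrees $2j-2$, with no multiplicative extensions and with the same answer over $\mathbb{Z}_{p}$ and over $\plocal$; by the computation of $f_{i}^{*}$ the composite sends the polynomial generators of $\bigotimes_{i}\hlgy{*}{\Omega D_{i}}$ bijectively onto those of $\hlgy{*}{\Omega W_{n,m}}$, up to units, hence is a homology isomorphism with both coefficients. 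Since $W_{n,m}$ is $2$-connected, all spaces in sight are simply connected of finite type, so Whitehead's theorem upgrades the homology isomorphism to a homotopy equivalence, which is the asserted decomposition.
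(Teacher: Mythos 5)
There is a genuine gap, and it sits in Step 3. First, ``the reduced product over $P_{i}$'' is not a construction that produces what you need: the James reduced product $J(P_{i})\simeq\Omega\Sigma P_{i}$ has Pontryagin ring the \emph{tensor} algebra $T(\widetilde{H}_{*}(P_{i}))$, not the exterior algebra. Producing a finite $H$-space $D_{i}$ with $H_{*}(D_{i})\cong\Lambda(\widetilde{H}_{*}(P_{i}))$, together with maps relating it to $\Omega\Sigma P_{i}$, is precisely the hard input; the paper imports it from Cohen and Neisendorfer's functor $M$ (Theorem \ref{T1}), whose hypothesis (at most $p-2$ odd cells) is exactly where the range $n\leq(p-1)(p-2)$ is really used. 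By contrast, your Step 2 is not where the range matters: the residue-class splitting of $\Sigma\mathbb{C}P^{n-1}$ (hence of the stunted space) is the Mimura--Nishida--Toda splitting, valid for all $n$ and natural, which the paper simply cites (Lemma \ref{L1}, Corollary \ref{C1}); your obstruction-theoretic sketch there is recoverable but unnecessary.

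Second, even granting the existence of $D_{i}$ as an abstract $H$-space, your construction of $f_{i}\colon D_{i}\to W_{n,m}$ extending $P_{i}\hookrightarrow Q\to W_{n,m}$ relies on a ``reduced-product multiplication'' on $W_{n,m}$. No such multiplication exists: the fact that the cells of $W_{n,m}$ are products of cells of $Q$ is a statement about its CW structure, not the shadow of a map $W_{n,m}\times W_{n,m}\to W_{n,m}$, and indeed $W_{n,m}$ is not an $H$-space even $p$-locally (the paper points this out after Theorem \ref{MAIN1}). Without an $H$-structure on the target there is no multiplicative-extension mechanism producing $f_{i}$, and your whole comparison map $\prod_{i}\Omega D_{i}\to\Omega W_{n,m}$ has no construction. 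The paper avoids this problem entirely by never mapping into $W_{n,m}$: it decomposes the $H$-map $SU(n-m)\stackrel{\tilde{j}}{\longrightarrow}SU(n)$ compatibly into factors $B_{i}^{\prime}\stackrel{g_{i}}{\longrightarrow}B_{i}$ (using naturality of the splitting of $\Sigma\mathbb{C}P^{k-1}$, the James construction into the $H$-spaces $SU(k)$, and functoriality of $M$), obtains fibrations $B_{i}^{\prime}\to B_{i}\to D_{i}$ from Proposition \ref{T1B}, and then identifies $\Omega W_{n,m}$, the fibre of $\tilde{j}$, with $\prod_{i}\Omega D_{i}$ by the five lemma. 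To repair your argument you would either have to reproduce this fibrewise comparison or find some other way to construct a map out of (or into) $\prod_{i}\Omega D_{i}$; the homology bookkeeping in your final paragraph is the easy part once such a map exists.
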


We should also mention that the complex Stiefel manifolds are not $H$-spaces in general (even in the $p$-local sense), so one should not hope that the above decomposition 
will hold before looping. In particular, it is not clear whether the above decomposition is an $H$-space decomposition. 

Theorem~\ref{MAIN1} is based on a decomposition of the unitary group $SU(n)$ for
arbitrary $n$ as a product of indecomposable spaces as in~\cite{MNT2} and~\cite{T2}. Our approach expands
on ideas of Theriault~\cite{T2}, and uses a fundamental construction of Cohen and Neisendorfer~\cite{CN1}
to give a decomposition that is functorial and enjoys good naturality properties (see Theorem~\ref{T2}). The real analog of this decomposition is provided in Theorem~\ref{T3}. In a similar vein to Theorem~\ref{MAIN1}, this leads us to low rank homotopy decompositions for the loop spaces of \textit{real Stiefel manifolds} $V_{n,m}=SO(n)/SO(n-m)$ and \textit{symplectic Stiefel manifolds} $X_{n,m}=Sp(n)/Sp(n-m)$, stated as Theorems~\ref{MAIN2} and~\ref{MAIN3}.

It is clear that these decompositions have an application towards computing $p$-exponents. 
Recall for an arbitrary space $X$ the $p$-exponent $exp_{p}(X)$ of $X$ is defined as the smallest power $p^t$ that annihilates the $p$-primary torsion of $\pi_{*}(X)$. Then we have the following. 

\begin{theorem}
\label{MAIN4}
Fix $0<m\leq (p-1)(p-2)$ and assume either $2m<n$ or $0<m<n\leq (p-1)(p-2)$. Let $k$ be the number of cells in the suspended stunted complex projective space $\Sigma\mathbb{C}P^{n-1}_{m}$ that are in dimensions of the form $(2n-1-iq)$ for $0\leq i<p-1$. Then $$exp_{p}(W_{n,m})\leq p^{n-1+(k-1)}.$$ 

Furthermore, if $k>1$ and $0<m<n\leq (p-1)(p-2)$, and there exists a cell of dimension $(2n-1-iq)$ in $\Sigma\mathbb{C}P^{n-1}_{m}$ such that $i>0$ and $(2n-1-iq)$ is divisible by $p$, then 
$$exp_{p}(W_{n,m})\leq p^{n-1+(k-2)}.$$ 

\end{theorem}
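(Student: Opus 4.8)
The plan is to leverage the homotopy decomposition $\Omega W_{n,m}\simeq\prod_{1\leq i\leq p-1}\Omega D_i$ from Theorem~\ref{MAIN1} (and, in the stable range $2m<n$, the analogous fibration-theoretic input alluded to after it) to reduce the $p$-exponent question for $W_{n,m}$ to a $p$-exponent question for each factor $D_i$. Since $\pi_*(W_{n,m})\cong\bigoplus_i\pi_*(\Omega D_i)=\bigoplus_i\pi_{*+1}(D_i)$, we have $exp_p(W_{n,m})=\max_i exp_p(D_i)$, so it suffices to bound each $exp_p(D_i)$. First I would identify the factors $D_i$ more concretely: by Theorem~\ref{MAIN1} each $D_i$ is an $H$-space whose homology is an exterior algebra $\Lambda(y_{2(n-m+i)-1+jq})$ on $k_i+1$ generators in an arithmetic progression of step $q$. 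The key observation is that the generators correspond precisely to cells of $\Sigma\mathbb{C}P^{n-1}_m$ in dimensions $2(n-m+i)-1,\ldots,2(n-m+i)-1+k_iq$; the hypothesis on $k$ counts exactly those $i$ (equivalently, those residue classes $\bmod\ q$) for which $k_i\geq 1$, with $k$ being the number of cells in the distinguished residue range $0\leq i<p-1$. So the factor of largest homological complexity has $k$ generators, the top one in dimension $2n-1$.

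Next I would invoke a James-type or fibration-theoretic exponent bound for the individual $D_i$. The model case is $SU(n)$ itself, where the classical bound $exp_p(SU(n))\leq p^{n-1}$ comes from the filtration of $SU(n)$ by $SU(j)$ with successive quotients $\Sigma\mathbb{C}P^{j-1}$ and the fact that $\Sigma\mathbb{C}P^{j-1}$ has $p$-exponent governed by its number of cells. Here each $D_i$ plays the role of an "indecomposable summand" built from a sub-skeleton of $\Sigma\mathbb{C}P^{n-1}_m$ with $k_i+1$ cells: one expects a fibration sequence expressing $D_i$ as an iterated extension, whose $j$-th stage involves a sphere $S^{2(n-m+i)-1+jq}$ (or a mod-$p$ Moore-type space). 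Each sphere $S^{2\ell-1}$ contributes a factor of roughly $p^{\ell-1}$ to the exponent via the Cohen--Moore--Neisendorfer / Selick bounds (for odd primes, $exp_p(S^{2\ell-1})$ is essentially $p^{\ell-1}$ in the relevant range $n\leq(p-1)(p-2)$, which is exactly where these classical sphere-exponent results are available and sharp). Summing the contributions from the $k_i+1$ cells of the dominant factor, whose top cell sits in dimension $2n-1$, gives a bound of the shape $p^{(n-1)+k\cdot(\text{something})}$; the point of the statement is that the extra cells past the bottom one each only cost an additional factor of $p$ (not their full dimension's worth), because the attaching maps are highly connected and one can pull the exponent through the fibration stages — this is the usual "each extra cell costs one $p$" phenomenon in James/Stiefel exponent estimates. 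This yields $exp_p(W_{n,m})\leq p^{n-1+(k-1)}$.

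For the improved bound under the divisibility hypothesis, I would use the standard refinement: if one of the intermediate cells sits in a dimension $2\ell-1$ divisible by $p$, then the corresponding sphere $S^{2\ell-1}$ admits a better-than-generic exponent estimate (this is the Selick-type phenomenon where $S^{2\ell-1}$ with $p\mid(2\ell-1)$, or the relevant Moore space, loses a factor of $p$ in its exponent), so the total bound drops by one power of $p$ to $p^{n-1+(k-2)}$. Concretely, I expect to realize this by splitting off the relevant sphere from the fibration filtration of the dominant $D_i$ and applying the sharper sphere bound there. The main obstacle, and where the real work lies, is the second and third steps: producing the correct fibration/filtration of each $D_i$ from its homology (one needs that the cell-attaching data of $\Sigma\mathbb{C}P^{n-1}_m$ transfers to give actual fibration sequences with sphere or Moore-space fibers — this is where the restriction $n\leq(p-1)(p-2)$ is genuinely used, so that $\Sigma\mathbb{C}P^{n-1}_m$ is $p$-locally a wedge, or at least splits far enough, and the loop-space decomposition has enough naturality, cf. Theorem~\ref{T2}), and then verifying that the exponent estimate propagates through these stages additively with only a single power of $p$ per extra cell. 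Handling the stable range $2m<n$ separately (where one cannot use Theorem~\ref{MAIN1} directly) requires the analogous but distinct input that in that range $W_{n,m}$ retracts off, or fibers suitably over, pieces assembled from $\Sigma\mathbb{C}P^{n-1}_m$ — I would treat this by the standard stable-range splitting of Stiefel manifolds and then run the same exponent bookkeeping.
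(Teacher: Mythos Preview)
Your overall plan for the range $0<m<n\leq(p-1)(p-2)$ is the paper's: decompose $\Omega W_{n,m}\simeq\prod_i\Omega D_i$, reduce to bounding $\exp_p(D_i)$, and filter each $D_i=M(A_i)$ by skeleta to get fibrations with sphere cofibres. Two ingredients you are missing, however, are exactly what make the argument go through, and your account of the refinement is actually wrong.

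First, the ``each extra cell costs one $p$'' is not obtained by summing sphere exponents. The mechanism is: the pinch map $M(A^k)\to S^{2r+1+kq}$ has a section up to a $p^k$-power map because the $p$-local James number of the corresponding Stiefel quotient is $p^k$ (Proposition~\ref{tJN}). Feeding this into Proposition~\ref{MV} yields a fibration
\[
S^{2r+1+kq}\{p^k\}\longrightarrow M(A^{k-1})\times S^{2r+1+kq}\longrightarrow M(A^k),
\]
and the exponent bound $\exp_p(M(A^k))\leq p^k\cdot\max\bigl(\exp_p(M(A^{k-1})),\,p^{r+k(p-1)}\bigr)$ then gives $p^{r+kp}$ inductively. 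Without the James-number input and Proposition~\ref{MV} you have no control over how the fibration stages interact; a naive long-exact-sequence argument would overshoot badly.

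Second, your explanation of the improved bound is incorrect. The saving of one power of $p$ has nothing to do with $\exp_p(S^{2\ell-1})$ being smaller when $p\mid(2\ell-1)$; the Cohen--Moore--Neisendorfer exponent $\exp_p(S^{2\ell+1})=p^\ell$ is uniform. What actually happens (Proposition~\ref{tJN}, part~(1)) is that when some intermediate cell of $A$ lies in a dimension divisible by $p$, the James number drops: the composite $S^{2r+1+kq}\to M(A^k)\to S^{2r+1+kq}$ has degree $p^{k-1}$ rather than $p^k$. Then the fibre in Proposition~\ref{MV} becomes $S^{2r+1+kq}\{p^{k-1}\}$, whose exponent is $p^{k-1}$, and the induction improves by one power.

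Finally, the stable case $2m<n$ is not handled by a splitting of $W_{n,m}$; the paper instead uses the fibration $W_{n-(p-1),m-(p-1)}\to W_{n,m}\to W_{n,p-1}$, shows (Lemma~\ref{lE2}) that $\Omega\pi$ admits a section up to a $p^t$-power map with $p^t$ governed by the same James numbers, applies Proposition~\ref{MV} again, and inducts on $m$. Your sketch does not supply any of this structure.
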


We will see (Remark~\ref{rProd}) that the precise bound $\exp_{p}(W_{n,m})=p^{n-1}$ holds whenever $m\leq p-1$.
By using Theorems \ref{MAIN2} and \ref{MAIN3} we can also compute $p$-exponent bounds for the real and symplectic Stiefel manifolds within certain dimensional ranges. These results are stated as Theorems \ref{MAIN5} and \ref{MAIN6} without proof.

\section{Preliminary Facts About Finite $H$-spaces}

Let $\mathcal{C}$ be the sub-category of spaces and continuous maps defined as follows. The objects in $\mathcal{C}$ are $p$-localizations of path-connected $CW$-complexes $X$, where $X$ consists of no more than $p-2$ odd dimensional cells and no even cells, and the morphisms are continuous maps between these spaces. Let $\mathcal{D}$ be the category of $p$-local finite $H$-spaces spaces and $H$-maps. In this section we recall Cohen and Neisendorfer's~\cite{CN1} construction of a functor between these categories, which will be of fundamental use in our proof of Theorem~\ref{MAIN1}.

\begin{theorem}
\label{T1}
Let $X$ be a space in $\mathcal{C}$. There exists a functor \map{M}{\mathcal{C}}{\mathcal{D}} such that:
\begin{romanlist}
\item \hlgy{*}{M(X)} $\cong$ $\Lambda (\rhlgy{*}{X})$, and there is a functorial map \map{\iota}{X}{M(X)} that induces an inclusion of generating sets on homology;  
\item  there exist functorial maps \seqmm{M(X)}{s}{\Omega\Sigma X}{r}{M(X)} such that the composition $r\circ s$ is homotopic to the identity;
\item  the composition \seqmm{X}{\iota}{M(X)}{s}{\Omega\Sigma X} induces the inclusion 
\seqm{\hlgy{*}{X}}{}{T(\rhlgy{*}{X})} on homology.~$\qqed$
\end{romanlist}

\end{theorem}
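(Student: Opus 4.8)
The plan is to realize $M(X)$ as a functorial retract of $\Omega\Sigma X$, obtained by attaching cells to $X$ in the style of James and Cohen--Neisendorfer, and then to read properties (i)--(iii) off the homology Hopf algebra of $\Omega\Sigma X$.

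First I would recall the James construction. For path-connected $X$ there is a natural equivalence $\Omega\Sigma X\simeq J(X)$, where $J(X)$ is the free topological monoid on $X$; the unit $e\colon X\hookrightarrow J(X)$ is the $(\Sigma,\Omega)$-adjunction map, the James filtration $J_1(X)\subseteq J_2(X)\subseteq\cdots$ has $J_k(X)/J_{k-1}(X)\simeq X^{\wedge k}$, and on homology (over $\mathbb{Z}_p$ or $\plocal$, where all groups in sight are free) there is a natural Hopf algebra isomorphism $\hlgy{*}{\Omega\Sigma X}\cong T(V)$ with $V=\rhlgy{*}{X}$, carrying $e_*$ to the inclusion $V\hookrightarrow T(V)$ and the James filtration to the word-length filtration. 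Writing $T(V)=UL(V)$ for the universal enveloping algebra of the free graded Lie algebra $L(V)$, the Lie algebra surjection $L(V)\twoheadrightarrow V^{\mathrm{ab}}$ onto the abelian Lie algebra (the identity on generators) induces a natural surjection of Hopf algebras $\rho\colon T(V)\twoheadrightarrow U(V^{\mathrm{ab}})=\Lambda(V)$, the last identification because $V$ is concentrated in odd degrees; a Poincar\'e--Birkhoff--Witt argument shows that $\rho$ admits a coalgebra section $\sigma$ restricting to the inclusion on $V$. Thus $\Lambda(V)$ is a natural coalgebra retract of $\hlgy{*}{\Omega\Sigma X}$ compatible with $e_*$, and this is the homology we want $M(X)$ to carry.

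Next I would construct $M(X)$ geometrically. Following Cohen--Neisendorfer, one builds $M(X)$ from $X$ by attaching cells realizing the non-generating classes of $\Lambda(V)$, constructing simultaneously, by induction over skeleta, a natural map $\iota\colon X\to M(X)$ (the evident inclusion) and a natural map $s\colon M(X)\to\Omega\Sigma X$ extending $e$ over the new cells --- possible because $\Omega\Sigma X$ is an $H$-space and the obstructions to the extensions vanish in the range at issue. Next one exhibits $M(X)$ as a factor of a natural product decomposition $\Omega\Sigma X\simeq M(X)\times Z(X)$, with $Z(X)$ a loop space built from the smash powers $X^{\wedge k}$, $k\geq2$ (accounting for the higher Samelson products); this is where Hilton--Milnor- and James--Hopf-type arguments enter. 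Let $r\colon\Omega\Sigma X\to M(X)$ be the projection; then $r\circ s\simeq\mathrm{id}$ by construction, $s_*$ and $r_*$ realize $\sigma$ and $\rho$, and one transports an $H$-structure to $M(X)$ along $s$, which is unital up to homotopy since $r\circ s\simeq\mathrm{id}$, with $r$ then an $H$-map, so that $M$ lands in $\mathcal{D}$. Now (i) holds: $\hlgy{*}{M(X)}\cong\Lambda(\rhlgy{*}{X})$, and $\iota_*$ sends $\rhlgy{*}{X}$ onto the generating set; (ii) is the retraction just described; and (iii) holds because $s\circ\iota=e$, so $(s\circ\iota)_*$ is the inclusion $\hlgy{*}{X}\to T(\rhlgy{*}{X})$. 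Functoriality of $M$ and naturality of $\iota,s,r$ are inherited from the James construction, the James--Hopf maps, and the chosen decomposition.

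The main obstacle is making the geometric construction go through \emph{functorially}: extending $e$ multiplicatively over the attached cells, choosing the splitting $\Omega\Sigma X\simeq M(X)\times Z(X)$, and producing the $H$-structure on $M(X)$, all naturally in $X\in\mathcal{C}$. This is exactly where the hypothesis that $X$ has no more than $p-2$ odd cells and no even cells is indispensable: it keeps the iterated Samelson products and James--Hopf invariants that cut out $Z(X)$, together with the secondary obstructions to the $H$-structure, below the range in which $p$-primary torsion in the homotopy of spheres, or the failure of semisimplicity of $\plocal[\Sigma_{k}]$, could obstruct the construction or destroy its naturality. Once the natural decomposition is in hand, (i)--(iii) are the homology bookkeeping indicated above.
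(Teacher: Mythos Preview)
The paper does not prove this statement: Theorem~\ref{T1} is quoted from Cohen and Neisendorfer~\cite{CN1} (note the terminal $\qqed$), and the paragraph introducing it says explicitly that the section only ``recall[s] Cohen and Neisendorfer's construction.'' So there is no in-paper proof to compare your proposal against.

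Your sketch is in the right spirit---realize $\Lambda(V)$ as a natural coalgebra retract of $T(V)=H_*(\Omega\Sigma X)$ and then upgrade this to a geometric retraction $M(X)\to\Omega\Sigma X\to M(X)$---but two places are too loose to count as a proof. First, ``attach cells to $X$ realizing the non-generating classes of $\Lambda(V)$ and extend $e$ over them because obstructions vanish'' does not by itself guarantee that the resulting space has homology exactly $\Lambda(V)$, nor that $s_*$ is injective with the correct image; in~\cite{CN1} the construction proceeds by induction on the number of cells, at each stage producing $M(X)$ as the total space of a principal spherical fibration over $M$ of a smaller complex (this is what makes Proposition~\ref{T1B}(i) go), and it is this fibration structure, not a bare cell-attachment, that controls the homology. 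Second, the full product decomposition $\Omega\Sigma X\simeq M(X)\times Z(X)$ you invoke is stronger than what the theorem asserts and is not actually needed: once $s_*$ embeds $H_*(M(X))$ as a subcoalgebra retract of $T(V)$, the left inverse $r$ can be produced directly without identifying the complementary factor. Your closing remarks on where the bound $p-2$ enters (keeping the relevant symmetric-group idempotents and Samelson obstructions in a range where they behave well $p$-locally) are on target.
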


The $H$-space structures for the spaces under the image of $M$ are induced by the retraction in part $(ii)$ of Theorem~\ref{T1}.
The functor $M$ takes certain cofibrations to fibrations, as is stated in the following proposition from ~\cite{CN1}. 

\begin{proposition}
\label{T1B}

Let $X$ and $Y$ be spaces in $\mathcal{C}$. Let $X'$ be a $p$-local subcomplex of $X$, and $X''$ be the cofibre of the inclusion \mapa{X'}{X}. 
\begin{romanlist}
\item There exists a fibration $\seqmm{M(X')}{}{M(X)}{}{M(X'')}$;
\item if $X$ and $Y$ have $l$ and $m$ cells such that $l+m\leq p-2$, then $M(X\vee Y)$ is homotopy equivalent to $M(X)\times M(Y)$.~$\qqed$
\end{romanlist}
\end{proposition}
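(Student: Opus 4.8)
The plan is to produce the fibration in (i) by applying $M$ to the cofibre sequence and then pinning down its homotopy fibre with a cohomology computation; part (ii) will drop out as the split case of (i). First note that $X'$, $X$ and $X''$ all lie in $\mathcal{C}$: the cofibre of a cellular inclusion between complexes with only odd-dimensional cells again has only odd cells, and no more of them than $X$, hence at most $p-2$, while $X'$ is path-connected by hypothesis and $X''$ is path-connected because $X$ is. Feeding $\seqmm{X'}{}{X}{}{X''}$ into the long exact reduced homology sequence and using that all the groups involved are free and concentrated in odd degrees, the connecting maps vanish for parity reasons, so
$$0\longrightarrow\rhlgy{*}{X'}\longrightarrow\rhlgy{*}{X}\longrightarrow\rhlgy{*}{X''}\longrightarrow 0$$
is split short exact. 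Let $\map{a}{M(X')}{M(X)}$ and $\map{b}{M(X)}{M(X'')}$ be induced by the functoriality of $M$ from $\inclusion{X'}{X}$ and $\mapa{X}{X''}$. Dualizing the displayed sequence and using Theorem~\ref{T1}(i), the map $b^{*}$ exhibits $\cohlgy{*}{M(X)}\cong\Lambda(\rcohlgy{*}{X})$ as a \emph{free} module over $\cohlgy{*}{M(X'')}\cong\Lambda(\rcohlgy{*}{X''})$, with $\cohlgy{*}{M(X)}\otimes_{\cohlgy{*}{M(X'')}}R\cong\Lambda(\rcohlgy{*}{X'})\cong\cohlgy{*}{M(X')}$, where $R$ is the ground ring.

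Since $\seqmm{X'}{}{X}{}{X''}$ composes to the constant map, $b\circ a$ is null-homotopic; choosing a null-homotopy then produces a lift $\map{\widetilde{a}}{M(X')}{F}$ to the homotopy fibre $F$ of $b$, satisfying $\iota_{F}\circ\widetilde{a}\simeq a$, where $\map{\iota_{F}}{F}{M(X)}$ is the fibre inclusion. As $b$ is an $H$-map of connected $H$-spaces, $F$ is again a connected $H$-space, hence simple, and all of $F$, $M(X')$ and $M(X)$ have finite-type homology, so it is enough to check that $\widetilde{a}$ is a cohomology isomorphism. Running the Eilenberg--Moore spectral sequence of $\seqmm{F}{}{M(X)}{b}{M(X'')}$, the freeness just noted forces it to collapse, giving $\cohlgy{*}{F}\cong\cohlgy{*}{M(X)}\otimes_{\cohlgy{*}{M(X'')}}R\cong\cohlgy{*}{M(X')}$, with the edge homomorphism $\cohlgy{*}{M(X)}\to\cohlgy{*}{F}$ the evident quotient map. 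Since that quotient map is also $a^{*}$ followed by the above identification and $a^{*}=\widetilde{a}^{*}\circ\iota_{F}^{*}$, we conclude that $\widetilde{a}^{*}$ is the asserted isomorphism, so $\widetilde{a}$ is a homotopy equivalence and we obtain the fibration $\seqmm{M(X')}{}{M(X)}{}{M(X'')}$.

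For (ii), apply (i) with $X'=X$, so that $X''\simeq Y$ and we get a fibration $\seqmm{M(X)}{a}{M(X\vee Y)}{b}{M(Y)}$. The inclusion $\inclusion{Y}{X\vee Y}$ is a section of $\mapa{X\vee Y}{Y}$, so by functoriality it induces a section $\sigma$ of $b$. The composite
$$M(X)\times M(Y)\stackrel{a\times\sigma}{\longrightarrow}M(X\vee Y)\times M(X\vee Y)\longrightarrow M(X\vee Y),$$
where the second map is the $H$-multiplication of $M(X\vee Y)$, induces on homology the isomorphism $\hlgy{*}{M(X)}\otimes\hlgy{*}{M(Y)}\cong\hlgy{*}{M(X\vee Y)}$ supplied by Theorem~\ref{T1}(i) together with $\rhlgy{*}{X\vee Y}=\rhlgy{*}{X}\oplus\rhlgy{*}{Y}$; as all these spaces are simple and of finite type, this composite is a homotopy equivalence.

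The real content is the fibre identification in (i): justifying the collapse of the spectral sequence, and above all dealing with the case in which $X'$ or $X''$ carries a cell in dimension $1$, so that $M(X'')$ fails to be simply connected and the Eilenberg--Moore spectral sequence need not converge as stated. The way around this is to use that a connected finite-type $H$-space splits as a product of circles (coming from its free abelian fundamental group) with a simply connected $H$-space, so that after splitting off these circle factors from $M(X'')$ and $M(X)$ one is reduced to a simply connected base; alternatively one runs the Serre spectral sequence of the fibration directly. The remaining ingredients --- that connected $H$-spaces are simple, and that a homology isomorphism between simple spaces of finite type is a homotopy equivalence --- are standard.
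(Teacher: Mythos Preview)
The paper does not prove this proposition at all: it is quoted from Cohen--Neisendorfer \cite{CN1} and the trailing $\qqed$ indicates the proof is omitted. So there is no argument in the paper to compare yours against; you are reconstructing a proof from scratch using only the properties recorded in Theorem~\ref{T1}.

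Your outline is reasonable and your derivation of (ii) from (i) is clean. One point deserves more care. To know that $b^{*}$ makes $\cohlgy{*}{M(X)}$ free over $\cohlgy{*}{M(X'')}$, and that $a^{*}$ is the expected quotient map, you need to know $a_{*}$ and $b_{*}$ on \emph{products} of generators, not just on generators. Theorem~\ref{T1}(i) as stated only gives the module isomorphism $\hlgy{*}{M(X)}\cong\Lambda(\rhlgy{*}{X})$ and the behaviour of $\iota$; it does not say the Pontryagin product is the exterior product. You are implicitly using that $a$ and $b$ are $H$-maps (true, since $M$ lands in $\mathcal{D}$) \emph{and} that the Pontryagin ring is the exterior algebra, which is true in Cohen--Neisendorfer's construction but is an extra input here. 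Without it, the naturality square with $\iota$ still gives $b^{*}(y)=q^{*}(y)+(\text{decomposables})$ on each generator $y$, and since all generators are odd this is enough for freeness after a change of basis; you should say so.

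The Eilenberg--Moore step then shows $\iota_{F}^{*}$ is the surjection $\cohlgy{*}{M(X)}\twoheadrightarrow\cohlgy{*}{M(X)}\otimes_{\cohlgy{*}{M(X'')}}R$, and comparing kernels with $a^{*}$ forces $\widetilde{a}^{*}$ to be a surjection between finite-type graded modules of equal rank, hence an isomorphism. Your handling of the non-simply-connected case by peeling off circle factors is the right workaround.
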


Observe that Theorem \ref{T1} implies $M(S^{2n-1})=S^{2n-1}$. Hence Proposition \ref{T1B} implies a cofibration sequence \seqmm{X'}{}{X}{}{S^{2n-1}} gives a fibration sequence \seqmm{M(X')}{}{M(X)}{}{S^{2n-1}}.

\section{Decomposition of Looped Stiefel Manifolds}

\subsection{Complex Stiefel Manifolds}

The following decomposition of the suspended complex projective space is due to Mimura, Nishida, and Toda ~\cite{MNT1}. 

\begin{lemma}
\label{L1}
For each positive integer $n$, there exists a wedge decomposition 
$$\Sigma\mathbb{C}P^{n-1}\simeq\displaystyle\bigvee_{1\leq i\leq p-1}C_{i}$$
\noindent with
$$\rhlgy{*}{C_{i}}\cong\{x_{2i+1},x_{(2i+1)+q},...,x_{(2i+1)+k_{i}q}\},$$
\noindent where $k_{i}$ is the largest integer such that $(2i+1)+k_{i}q\leq 2n-1$. These decompositions are natural with respect to inclusions $j:$\inclusion{\Sigma\mathbb{C}P^{n-m-1}}{\Sigma\mathbb{C}P^{n-1}}.~$\qqed$
\end{lemma}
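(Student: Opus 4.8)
The plan is to produce the wedge splitting of $\Sigma\mathbb{C}P^{n-1}$ by exhibiting, for each residue class $i$ with $1\le i\le p-1$, an idempotent self-map of $\Sigma\mathbb{C}P^{n-1}$ that picks out exactly the cells in dimensions $(2i+1)+kq$, and then invoking the standard idempotent-splitting principle for $p$-local finite complexes. First I would recall that, $p$-locally, $\mathbb{C}P^{n-1}$ is a stably reducible piece on which the cyclic group of units acts: concretely, one uses the unstable Adams operations $\psi^\ell$ on $\mathbb{C}P^{n-1}$ (or their suspensions), which on $\widetilde{H}^{2j}(\mathbb{C}P^{n-1})$ act by multiplication by $\ell^{\,j}$. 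Fixing a generator $\ell$ of $(\mathbf{Z}/p\mathbf{Z})^\times$, the operator $\psi^\ell$ has eigenvalues that are powers of $\ell$, and two cohomology degrees $2j$ and $2j'$ lie in the same eigenspace precisely when $j\equiv j'\pmod{p-1}$, i.e.\ precisely when the corresponding cells differ in dimension by a multiple of $q=2(p-1)$. This is where the hypothesis that $p$ is odd and the bookkeeping with $q$ enter.

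Next I would convert this eigenspace decomposition on (co)homology into an actual wedge decomposition of spaces. Because $\Sigma\mathbb{C}P^{n-1}$ is a finite suspension and we are working $p$-locally, the ring of self-maps in the stable (or, with care, the unstable suspension) category is finite, so the spectral projections $e_i = \tfrac{1}{p-1}\sum_{t} \ell^{-ti}(\psi^\ell)^t$ associated to the distinct eigenvalues are realized, after clearing the denominator $p-1$ which is invertible at $p$, by honest self-maps $e_i\colon \Sigma\mathbb{C}P^{n-1}\to\Sigma\mathbb{C}P^{n-1}$ with $e_i^2\simeq e_i$, $e_ie_j\simeq 0$ for $i\ne j$, and $\sum_i e_i\simeq \mathrm{id}$. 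Telescoping/mapping-cylinder arguments for idempotents on suspensions of finite complexes then give $\Sigma\mathbb{C}P^{n-1}\simeq\bigvee_{1\le i\le p-1}C_i$ with $C_i$ the homotopy image of $e_i$; by construction $\widetilde{H}_*(C_i)$ is spanned by exactly the classes $x_{2i+1},x_{(2i+1)+q},\ldots,x_{(2i+1)+k_iq}$, with $k_i$ the largest integer keeping the top such cell in dimension $\le 2n-1$, as claimed. (Indexing the wedge summand containing the bottom cell $x_3$ as $C_1$, $x_5$ as $C_2$, and so on, matches the stated formula.)

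Finally, for naturality with respect to $j\colon\Sigma\mathbb{C}P^{n-m-1}\hookrightarrow\Sigma\mathbb{C}P^{n-1}$: the Adams operations are natural for this inclusion (it is induced by a map of spaces and $\psi^\ell$ is defined functorially), hence the idempotents $e_i$ are compatible, $j\circ e_i^{\,n-m}\simeq e_i^{\,n}\circ j$, and therefore $j$ restricts to maps $C_i^{(n-m)}\to C_i^{(n)}$ of the respective wedge summands, which on homology is the evident inclusion of the truncated exterior/polynomial-type generating sets. I expect the main obstacle to be purely technical rather than conceptual: namely, being careful that the idempotent splitting is carried out in a category where idempotents actually split and where the self-maps $\psi^\ell$ are available unstably on the suspension (one may need to pass through the stable category for the splitting and then note that a wedge of spheres-with-attaching-maps lifts, or simply cite \cite{MNT1} for the unstable statement), and checking that the constant $\tfrac{1}{p-1}$ really is a $p$-local unit and that the eigenvalue collisions are exactly the congruence classes mod $p-1$ with no accidental coincidences in the range $n\le(p-1)(p-2)$.
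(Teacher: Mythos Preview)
The paper does not actually prove Lemma~\ref{L1}; it is stated with a $\qqed$ and attributed to Mimura, Nishida, and Toda~\cite{MNT1}. So there is no in-paper argument to compare against: you are supplying a proof where the author simply cites one. That said, your outline via unstable Adams operations (equivalently, the $p$th-power self-maps of $\mathbb{C}P^{\infty}$ restricted to skeleta) and spectral idempotents is exactly the standard argument behind the cited result, and it is correct in outline.

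Two small technical points are worth tightening. First, your remark about ``no accidental coincidences in the range $n\le(p-1)(p-2)$'' is a red herring: the lemma is asserted for \emph{all} $n$, and the eigenvalues $\ell^{i}$ for $1\le i\le p-1$ are already pairwise distinct in $\mathbb{Z}/p$ since $\ell$ generates $(\mathbb{Z}/p)^{\times}$, so the eigenspace decomposition on cohomology is clean regardless of $n$. The bound $(p-1)(p-2)$ only enters later in the paper, when the Cohen--Neisendorfer functor $M$ is applied. Second, the phrase ``the ring of self-maps \ldots\ is finite'' is not accurate; what you need is that $[\Sigma\mathbb{C}P^{n-1},\Sigma\mathbb{C}P^{n-1}]$ is a finitely generated $\mathbb{Z}_{(p)}$-module and that idempotents split in the $p$-local homotopy category of suspensions of finite complexes (telescope construction), which is standard. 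With those adjustments your argument goes through and matches the content of~\cite{MNT1}.
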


The \textit{stunted complex projective space} $\mathbb{C}P^{n}_{m}$ is the cofibre of the inclusion $\seqm{\mathbb{C}P^{n-m}}{j}{\mathbb{C}P^{n}}$.
By the naturality of the above decompositions, $j$ splits as a wedge of maps \seqm{C^{\prime}_i}{j_i}{C_i} for $1\leq i\leq p-1$. Then the cofibre $\Sigma\mathbb{C}P^{n-1}_{m}$ of $j$ splits as a wedge of $p-1$ spaces that are the homotopy cofibres of the maps $j_i$. We record this as the following corollary.

\begin{corollary}
\label{C1}
For each pair of positive integers $m<n$, there exists a wedge decomposition 
$$\Sigma\mathbb{C}P^{n-1}_{m}\simeq\displaystyle\bigvee_{1\leq i\leq p-1}A_{i}$$
\noindent with
$$\rhlgy{*}{A_{i}}\cong\{x_{2(n-m+i)-1},x_{2(n-m+i)-1+q},...,x_{2(n-m+i)-1+k_{i}q}\},$$
\noindent where $k_{i}$ is the largest integer such that $2(n-m+i)-1+k_{i}q\leq 2n-1$.~$\qqed$

\end{corollary}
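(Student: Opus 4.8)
The plan is to derive Corollary~\ref{C1} from Lemma~\ref{L1} by applying it to the two projective spaces involved and then passing to cofibres. Since suspension commutes with the formation of mapping cones, the definition of the stunted projective space gives that $\Sigma\mathbb{C}P^{n-1}_{m}$ is the homotopy cofibre of the suspended inclusion \seqm{\Sigma\mathbb{C}P^{n-m-1}}{\Sigma j}{\Sigma\mathbb{C}P^{n-1}}. Write $\Sigma\mathbb{C}P^{n-m-1}\simeq\bigvee_{1\leq i\leq p-1}C'_{i}$ and $\Sigma\mathbb{C}P^{n-1}\simeq\bigvee_{1\leq i\leq p-1}C_{i}$ as in Lemma~\ref{L1}; here $\rhlgy{*}{C'_{i}}$ is spanned by the classes of degree $\equiv 2i+1\pmod q$ up to $2(n-m)-1$, and $\rhlgy{*}{C_{i}}$ by those of degree $\equiv 2i+1\pmod q$ up to $2n-1$. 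The heart of the matter is to show that, under these equivalences, $\Sigma j$ is homotopic to a wedge sum $\bigvee_{i}j_{i}$ of maps \seqm{C'_{i}}{j_{i}}{C_{i}}, i.e.\ that the ``off-diagonal'' components \mapa{C'_{i}}{C_{j}} for $i\neq j$ are null-homotopic.

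To prove this I would use the naturality clause of Lemma~\ref{L1} in the following form: the Mimura--Nishida--Toda splitting is realized by a natural system of mutually orthogonal homotopy idempotents on $\Sigma\mathbb{C}P^{N-1}$ whose homotopy sum is the identity (the Adams-type projections onto the congruence-class summands), so there are idempotents $e_{i}$ on $\Sigma\mathbb{C}P^{n-1}$ and $e'_{i}$ on $\Sigma\mathbb{C}P^{n-m-1}$ with $e_{i}\circ\Sigma j\simeq\Sigma j\circ e'_{i}$. Because $\Sigma j$ is a map out of a suspension it may be expanded as a homotopy sum over its components; combining $\mathrm{id}\simeq\sum_{i}e'_{i}$, the orthogonality $e'_{i}\circ e'_{l}\simeq 0$ for $i\neq l$, and the commutation relation above shows $\Sigma j\simeq\sum_{i}e_{i}\circ\Sigma j\circ e'_{i}$, and the $i$-th summand here is exactly a map $C'_{i}\to C_{i}$ placed in the $(i,i)$ slot. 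I expect this bookkeeping with the natural idempotents to be the only genuinely delicate point in the argument. Granting $\Sigma j\simeq\bigvee_{i}j_{i}$, the homotopy cofibre of a wedge of maps is the wedge of the homotopy cofibres, so setting $A_{i}$ to be the homotopy cofibre of $j_{i}$ gives $\Sigma\mathbb{C}P^{n-1}_{m}\simeq\bigvee_{1\leq i\leq p-1}A_{i}$.

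It remains to compute $\rhlgy{*}{A_{i}}$. The map $\Sigma j$ induces on reduced homology the standard inclusion $\rhlgy{*}{\Sigma\mathbb{C}P^{n-m-1}}\hookrightarrow\rhlgy{*}{\Sigma\mathbb{C}P^{n-1}}$, so each $j_{i}$ is a monomorphism on homology sending the generating set of $\rhlgy{*}{C'_{i}}$ onto the lowest generators of $\rhlgy{*}{C_{i}}$. The long exact homology sequence of the cofibration \seqmm{C'_{i}}{j_{i}}{C_{i}}{}{A_{i}} then breaks into short exact sequences and identifies $\rhlgy{*}{A_{i}}$ with the span of the remaining generators of $\rhlgy{*}{C_{i}}$, namely the classes of degree $\equiv 2i+1\pmod q$ lying strictly above $2(n-m)-1$ and at most $2n-1$. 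A short count shows that, as $i$ ranges over $1,\dots,p-1$, the bottom degrees occurring among these are precisely the odd integers $2(n-m)+1,\,2(n-m)+3,\,\dots,\,2(n-m)-1+q$; re-indexing the summands so that $A_{i}$ denotes the one whose bottom class has degree $2(n-m+i)-1$ puts $\rhlgy{*}{A_{i}}$ in exactly the form claimed, with $k_{i}$ the largest integer satisfying $2(n-m+i)-1+k_{i}q\leq 2n-1$ (and $A_{i}$ contractible when no such class exists, so that the corresponding summand may be dropped).
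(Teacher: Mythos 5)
Your proposal is correct and follows essentially the same route as the paper: invoke the naturality of the Mimura--Nishida--Toda splitting in Lemma~\ref{L1} to split the suspended inclusion as a wedge of maps $j_{i}\colon C'_{i}\to C_{i}$, identify $\Sigma\mathbb{C}P^{n-1}_{m}$ with the wedge of the cofibres of the $j_{i}$, and read off the homology (with the same re-indexing, cf.\ Remark~\ref{R1}). Your discussion of orthogonal idempotents and the homology long exact sequence merely fills in details that the paper leaves to the naturality clause of Lemma~\ref{L1}, so there is no substantive difference in approach.
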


A decomposition of the unitary group $SU(n)$ for arbitrary $n$ as a product of indecomposable spaces was given by Mimura, Nishida, and Toda in~\cite{MNT2}. The decompositions were of the form $SU(n)\simeq\prod^{p-1}_{i=1}\bar{B}_{i}$ such that \hlgy{*}{\bar{B}_{i}}$\cong\Lambda(x_{2i+1},x_{(2i+1)+q},...,x_{(2i+1)+k_{i}q})$, and $k_{i}$ is the largest integer such that $(2i+1)+k_{i}q\leq 2n-1$. A similar decomposition of $SU(n)$ is given by Theriault~\cite{T2} for $n\leq (p-1)(p-3)$, but this time each of the factors are generated by the functor $M$. We recover Theriault's decomposition for the slightly larger dimensional range $n\leq (p-1)(p-2)$. Along with this, we have the additional property that our decomposition is natural with respect to the inclusion \seqm{SU(n-m)}{\tilde{j}}{SU(n)} of $(n-m)$-frames into $n$-frames. This also presents an advantage over Mimura's, Nishida's, and Toda's decomposition in the sense that their decompositions are not known to be natural. Another advantage is that the maps between corresponding factors in these decompositions fit into certain fibration sequences, as is stated in part~$(iii)$ of Theorem~\ref{T2}. 

\begin{theorem}
\label{T2}
Fix integers $m$ and $n$ such that $0<m<n\leq (p-1)(p-2)$. Then there exists a homotopy commutative diagram of product decompositions
\[\diagram
		\prod^{p-1}_{i=1}B^{\prime}_{i}\dto^{\simeq}\rto^{\prod g_{i}}
		&\prod^{p-1}_{i=1}B_{i}\dto^{\simeq}\\
		SU(n-m)\rto^{\tilde{j}}
		&SU(n),
\enddiagram\]
\noindent such that the following properties hold:

\begin{romanlist}
\item \hlgy{*}{B_{i}}$\cong\Lambda(x_{2i+1},x_{(2i+1)+q},...,x_{(2i+1)+k_{i}q})$, where $k_{i}$ is the largest integer such that $(2i+1)+k_{i}q\leq 2n-1$; 
\item \hlgy{*}{B^{\prime}_{i}}$\cong\Lambda(x_{2i+1},x_{(2i+1)+q},...,x_{(2i+1)+k^{\prime}_{i}q})$, where $k^{\prime}_{i}$ is the largest integer such that $(2i+1)+k^{\prime}_{i}q\leq 2(n-m)-1$; 
\item There exist fibrations \seqmm{B^{'}_{i}}{g_i}{B_{i}}{}{D_{i}}, where $D_{i}$ is an $H$-space such that \hlgy{*}{D_{i}}$\cong\Lambda(x_{(2i+1)+(k^{\prime}_{i}+1)q},...,x_{(2i+1)+k_{i}q})$.

\end{romanlist} 
\end{theorem}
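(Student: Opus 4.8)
The strategy is to build the decomposition of $SU(n)$ by applying the functor $M$ to the wedge decomposition of $\Sigma\mathbb{C}P^{n-1}$ from Lemma~\ref{L1}, and then to compare it with the analogous decomposition of $SU(n-m)$ via naturality. Recall that $SU(n)$ is, $p$-locally, the total space of a fibration built from $\Sigma\mathbb{C}P^{n-1}$ (more precisely there is a map $\Sigma\mathbb{C}P^{n-1}\to SU(n)$ which is an isomorphism on the bottom cells and, after looping, James' construction identifies the homology), and for $n\leq (p-1)(p-2)$ the space $\Sigma\mathbb{C}P^{n-1}$ has few enough cells that its wedge summands $C_i$ of Lemma~\ref{L1} each lie in $\mathcal{C}$ — each $C_i$ has at most $k_i+1\leq p-1$ odd cells and no even cells, and one checks the total count works out in the needed range. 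Set $B_i := M(C_i)$. By Theorem~\ref{T1}(i), $\hlgy{*}{B_i}\cong\Lambda(\rhlgy{*}{C_i})\cong\Lambda(x_{2i+1},x_{(2i+1)+q},\dots,x_{(2i+1)+k_iq})$, giving~(i), and similarly $B'_i := M(C'_i)$ with $C'_i$ the corresponding summand of $\Sigma\mathbb{C}P^{n-m-1}$ gives~(ii). The map $\prod g_i$ is $\prod M(j_i)$ where $j_i\colon C'_i\to C_i$ is the restriction of the natural inclusion from Lemma~\ref{L1}; functoriality of $M$ makes the square commute, and naturality of $M$'s retraction $r$ together with the standard identification of $\Omega\Sigma\mathbb{C}P^{n-1}$-data with $SU(n)$ (compatibly with $\tilde{j}$) gives the vertical equivalences to $SU(n-m)$ and $SU(n)$.

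For part~(iii): the inclusion $j_i\colon C'_i\hookrightarrow C_i$ is, by Corollary~\ref{C1}, a $p$-local subcomplex inclusion whose cofibre is $A_i$, the $i$-th wedge summand of $\Sigma\mathbb{C}P^{n-1}_m$, with $\rhlgy{*}{A_i}\cong\{x_{2(n-m+i)-1},\dots\}$ — but note the indexing: the cells of $C_i$ split into those of dimension $\leq 2(n-m)-1$ (coming from $C'_i$) and those of dimension $>2(n-m)-1$ (the cofibre). One must check that $A_i$ is, up to reindexing, exactly $C_i$ with its bottom $k'_i+1$ cells removed, so that $\rhlgy{*}{A_i}\cong\{x_{(2i+1)+(k'_i+1)q},\dots,x_{(2i+1)+k_iq}\}$; this is a matter of matching the two cell-dimension descriptions. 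Then Proposition~\ref{T1B}(i) applied to $C'_i\hookrightarrow C_i\to A_i$ yields a fibration $M(C'_i)\to M(C_i)\to M(A_i)$, i.e. $B'_i\xrightarrow{g_i}B_i\to D_i$ with $D_i := M(A_i)$, and $\hlgy{*}{D_i}\cong\Lambda(\rhlgy{*}{A_i})$ has the claimed form. Since $D_i$ is in the image of $M$, it is an $H$-space by the remark after Theorem~\ref{T1}.

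The main obstacle I anticipate is verifying that the hypotheses of the $\mathcal{C}$/$\mathcal{D}$ machinery are actually met throughout — i.e. the careful cell-counting that pushes the range from Theriault's $n\leq(p-1)(p-3)$ up to $n\leq(p-1)(p-2)$. One has to track, for each $i$, the number $k_i+1$ of cells of $C_i$ and confirm it never exceeds $p-2$ (or handle the borderline $p-1$ case, perhaps by observing a summand with $p-1$ cells still behaves well, or by splitting off its top cell), and likewise that when Proposition~\ref{T1B}(i) is invoked the relevant subcomplexes/cofibres stay in $\mathcal{C}$. A secondary technical point is pinning down precisely why the $M$-generated decomposition of $\Omega\Sigma\mathbb{C}P^{n-1}$ assembles (via the retractions $s,r$ of Theorem~\ref{T1}(ii) and Proposition~\ref{T1B}(ii)) into a \emph{product} decomposition of $SU(n)$ itself rather than merely of its loop space — this uses that $SU(n)$ is already a retract of $\Omega\Sigma\mathbb{C}P^{n-1}$ in the appropriate $p$-local range, so the factors $B_i=M(C_i)$ land in $SU(n)$ and multiply together to an equivalence by a homology check, the homology of both sides being the same exterior algebra.
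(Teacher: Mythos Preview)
Your proposal is correct and follows essentially the same route as the paper: set $B_i=M(C_i)$, $B'_i=M(C'_i)$, use functoriality of $M$ for $g_i$, map each $B_i$ into $SU(n)$ via $B_i\xrightarrow{s}\Omega\Sigma C_i\to SU(n)$ (the second map being the James extension of $\Sigma\mathbb{C}P^{n-1}\to SU(n)$), multiply, and verify the equivalence on homology; part~(iii) comes from Proposition~\ref{T1B}(i) applied to $C'_i\hookrightarrow C_i\to A_i$.

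Two small clarifications: your cell-count worry is unnecessary, since for $n\leq (p-1)(p-2)$ and $i\geq 1$ one has $k_i\leq\lfloor(n-1-i)/(p-1)\rfloor\leq p-3$, so each $C_i$ has at most $p-2$ cells and lies squarely in $\mathcal{C}$; and you should not invoke Proposition~\ref{T1B}(ii) or the phrase ``$SU(n)$ is a retract of $\Omega\Sigma\mathbb{C}P^{n-1}$'' as input --- the former requires the \emph{total} cell count to be at most $p-2$ (false here), and the latter is only true a posteriori. The paper, like your final sentence, simply uses the James map $\Omega\Sigma C_i\to SU(n)$ and the $H$-multiplication on $SU(n)$ to assemble $\prod B_i\to SU(n)$ directly.
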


\begin{proof}

Fix an integer $i$ such that $1\leq i\leq p-1$. Let $C^{\prime}_{i}$ and $C_{i}$ be the corresponding summands in the wedge decompositions of $\Sigma\mathbb{C}P^{n-m-1}$ and $\Sigma\mathbb{C}P^{n-1}$ in Lemma \ref{L1}. For each natural number $k$, there exists a map \mapa{\Sigma\mathbb{C}P^{k-1}}{SU(k)} that induces on homology an isomorphism onto the generating set of $\hlgy{*}{SU(k)}\cong\Lambda(\hlgy{*}{\Sigma\mathbb{C}P^{k-1}})$. These maps are natural in the sense that we have the following commutative diagrams
\begin{equation}
\label{D1less}
\diagram
		\Sigma\mathbb{C}P^{n-m-1}\dto^{}\rto^{j}
		&\Sigma\mathbb{C}P^{n-1}\dto^{}\\
		SU(n-m)\rto^{\tilde{j}}
		&SU(n).
\enddiagram
\end{equation}
Take the compositions  $h\colon$\seqmm{C_{i}}{}{\Sigma\mathbb{C}P^{n-1}}{}{SU(n)} and $h^{\prime}\colon$ \seqmm{C^{\prime}_{i}}{}{\Sigma\mathbb{C}P^{n-m-1}}{}{SU(n-m)}. Combining the diagram in (\ref{D1less}) with the naturality of the decompositions in Lemma~\ref{L1}, we have a map \seqm{C^{\prime}_i}{j_{i}}{C_i} such that the following diagram homotopy commutes
\begin{equation}
\label{D1}
\diagram
		C^{\prime}_{i}\dto^{h^{\prime}}\rto^{j_{i}}
		&C_{i}\dto^{h}\\
		SU(n-m)\rto^{\tilde{j}}
		&SU(n).
\enddiagram
\end{equation}
Since $SU(n)$ and $SU(n-m)$ are homotopy associative $H$-spaces, and $\tilde{j}$ is an $H$-map, from the universal property of the James construction we obtain a homotopy commutative diagram
\[\diagram
		\Omega\Sigma C^{\prime}_{i}\dto^{\bar{h}^{\prime}}\rto^{\Omega\Sigma j_{i}}
		&\Omega\Sigma C_{i}\dto^{\bar{h}}\\
		SU(n-m)\rto^{\tilde{j}}
		&SU(n),
\enddiagram\]
\noindent where $\bar{h}$ and $\bar{h}^{\prime}$ are $H$-maps extending the maps $h$ and $h^{\prime}$.
Since $1\leq n\leq (p-1)(p-2)$, the space $C_{i}$ consists of less than $p-1$ odd dimensional cells. Thus we can apply Theorem \ref{T1} to obtain an $H$-space $B_{i}=M(C_{i})$, a map \seqm{C_{i}}{\iota}{B_{i}} that induces an inclusion of generating sets on homology, and a map \seqm{B_{i}}{s}{\Omega\Sigma C_{i}} with a left homotopy inverse. Similarly we obtain an $H$-space $B^{\prime}_{i}=M(C^{\prime}_{i})$, and maps $\iota^{\prime}$ and $s^{\prime}$ with similar properties. The map \seqm{C^{\prime}_{i}}{j_{i}}{C_{i}} induces an $H$-map \seqm{B^{\prime}_{i}}{g_{i}}{B_{i}} via the functor $M$, and we have the following homotopy commutative diagram
\begin{equation}
\label{D2}
\diagram
		B^{\prime}_{i}\dto^{s^{\prime}}\rto^{g_{i}}
		&B_{i}\dto^{s}\\
		\Omega\Sigma C^{\prime}_{i}\dto^{\bar{h}^{\prime}}\rto^{\Omega\Sigma j_{i}}
		&\Omega\Sigma C_{i}\dto^{\bar{h}}\\
		SU(n-m)\rto^{\tilde{j}}
		&SU(n),
\enddiagram
\end{equation}
where the top square commutes because of the functorial property of the maps in Theorem~\ref{T1}~$(ii)$. Using part $(iii)$ of Theorem~\ref{T1}, $\bar{h}\circ s$ induces an inclusion of the generating set of \hlgy{*}{B_{i}} into the generating set of \hlgy{*}{SU(n)}. Similarly $\bar{h}^{\prime}\circ s^{\prime}$ induces an inclusion of the generating set of \hlgy{*}{B^{\prime}_{i}} into the generating set of \hlgy{*}{SU(n-m)}.

Taking the product of diagrams (\ref{D2}) for every integer $i$ such that $1\leq i\leq (p-1)$, we obtain the following homotopy commutative diagram.
\[\diagram
		\prod^{p-1}_{i=1}B^{\prime}_{i}\dto^{\prod f^{\prime}_{i}}\rto^{\prod g_{i}}
		&\prod^{p-1}_{i=1}B_{i}\dto^{\prod f_{i}}\\
		\prod^{p-1}_{i=1}SU(n-m)\dto^{mult.}\rto^{\prod \tilde{j}}
		&\prod^{p-1}_{i=1}SU(n)\dto^{mult.}\\
		SU(n-m)\rto^{\tilde{j}}
		&SU(n),		
\enddiagram\]
\noindent where the left and right vertical compositions induce isomorphisms on the generating sets of the respective homology rings. Dualizing to mod-$p$ cohomology, both vertical compositions induce algebra maps that are isomorphisms on generating sets, so they both induce isomorphisms on mod-$p$ cohomology. Therefore both vertical compositions in the above diagram are homotopy equivalences.     

Finally, for each $1\leq i\leq p-1$, let $\bar{A}_{i}$ be the cofibre of the inclusion \seqm{C^{\prime}_{i}}{j_{i}}{C_{i}}. Then $\bar{A}_{i}$ consists of no more than $p-2$ odd dimensional cells. Applying Proposition \ref{T1B} to the cofibration sequence \seqmm{C^{\prime}_{i}}{j}{C_{i}}{}{\bar{A}_i} for each integer $i$, we obtain fibration sequences
$$\seqmm{B^{\prime}_{i}}{g_{i}}{B_{i}}{}{D_{i}},$$
where $D_{i}=M(\bar{A}_{i})$.\end{proof}

\begin{remark}
\label{R1}

Notice that the spaces $\bar{A}_{i}$ such that $D_{i}=M(\bar{A}_{i})$ are (with indices rearranged) precisely the summands in the wedge decomposition of $\Sigma\mathbb{C}P^{n-1}_{m}$ in Corollary \ref{C1}.

\end{remark}

We now prove one of our main theorems.

\begin{proof}[Proof of Theorem \ref{MAIN1}]

Applying Theorem \ref{T2} we obtain a diagram of fibration sequences
\begin{equation}
\label{D6}
\diagram
		\prod^{p-1}_{i=1}\Omega D_{i}\dto^{\ell}\rto^{}
		&\prod^{p-1}_{i=1}B^{\prime}_{i}\dto^{\simeq}\rto^{\prod g_{i}}
		&\prod^{p-1}_{i=1}B_{i}\dto^{\simeq}\\
		\Omega W_{n,m}\rto^{}
		&SU(n-m)\rto^{\tilde{j}}
		&SU(n),
\enddiagram
\end{equation}
\noindent for some induced map of fibres $\ell$. This diagram implies that the map $\ell$ is a homotopy equivalence by the $5$-lemma.

\end{proof}

\subsection{Real Stiefel Manifolds}
\label{sReal}

Where localized at an odd prime $p$, there is a difference in the homology of $SO(n)$ when $n$ is even as opposed to odd. That is, we have homology isomorphisms
\begin{equation}
\label{eHlgy1}
\hlgy{*}{SO(2k+1)}\cong\Lambda(x_{3},x_{7},...,x_{4k-1})
\end{equation}
\noindent and
\begin{equation}
\label{eHlgy2}
\hlgy{*}{SO(2k)}\cong\Lambda(x_{3},x_{7},...,x_{4k-5},\bar{x}_{2k-1}). 
\end{equation}
\noindent The inclusion of $(n-m)$-frames into $n$-frames \seqm{SO(n-m)}{\tilde{j}}{SO(n)} induces on homology the algebra map that sends each generator $x_{i}\in\hlgy{*}{SO(n-m)}$ to the corresponding generator $x_{i}\in\hlgy{*}{SO(n)}$, and if $n-m$ is even, the generator $\bar{x}_{n-m-1}\in\hlgy{*}{SO(n-m)}$ is mapped trivially.

For $n=2k$ it is well known (Theorem $6.5$ in reference~\cite{MT}) that there exists a decomposition 
\begin{equation}
\label{eDecomp}
SO(2k)\simeq S^{2k-1}\times SO(2k-1).
\end{equation}

\noindent Harris \cite{Harris} showed there are decompositions
\begin{equation}
\label{eHarris}
SU(2k)\simeq SO(2k+1)\times (SU(2k)/Sp(k))
\end{equation} 

\noindent that are natural with respect to the inclusions \seqm{SO(2(k-k')+1)}{\tilde{j}}{SO(2k+1)} and \seqm{SU(2(k-k'))}{\tilde{j}}{SU(2k)} for $k'\leq k$. With this we can prove the following homotopy decomposition as an application of Theorem \ref{T2}. A general form of this decomposition was found by Mimura, Nishida, and Toda~\cite{MNT2}, but The same advantages hold in our decomposition as was the case for the special unitary groups $SU(n)$ in the previous section.

\begin{theorem}
\label{T3}
Fix integers $m$ and $n$ such that $0<m<n\leq (p-1)(p-2)+1$, and let $r=\floor{\frac{p-1}{2}}$. Then there exists a homotopy commutative diagram of product decompositions
\begin{equation}
\label{eSquare}
\diagram
		X^{\prime}\times\prod^{r}_{i=1}\mathcal{B}^{\prime}_{i}\dto^{\simeq}\rto^{\bar{g}\times\prod g_{i}}
		&X\times\prod^{r}_{i=1}\mathcal{B}_{i}\dto^{\simeq}\\
		SO(n-m)\rto^{\tilde{j}}
		&SO(n)
\enddiagram
\end{equation}
\noindent such that the following properties hold.

\begin{romanlist}
\item \hlgy{*}{\mathcal{B}_{i}}$\cong\Lambda(x_{2i+1},x_{(2i+1)+2q},...,x_{(2i+1)+2k_{i}q})$, where $k_{i}$ is the largest integer such that $(2i+1)+2k_{i}q\leq 2n-3$; 
\item \hlgy{*}{\mathcal{B}^{\prime}_{i}}$\cong\Lambda(x_{2i+1},x_{(2i+1)+2q},...,x_{(2i+1)+2k^{\prime}_{i}q})$, where $k^{\prime}_{i}$ is the largest integer such that $(2i+1)+2k^{\prime}_{i}q\leq 2(n-m)-3$; 
\item There exist fibrations \seqmm{\mathcal{B}^{\prime}_{i}}{f_i}{\mathcal{B}_{i}}{}{\mathcal{D}_{i}}, $\hlgy{*}{\mathcal{D}_{i}}\cong\Lambda(x_{(2i+1)+2(k^{\prime}_{i}+1)q},...,x_{(2i+1)+2k_{i}q})$, 
and $\mathcal{D}_{i}$ is an $H$-space; 
\item The map \seqm{X^{\prime}}{\bar{g}}{X} is the trivial map;
\item If $n-m$ is even, $X^{\prime}=S^{n-m-1}$, and if $n-m$ is odd, then $X^{\prime}$ is a point;
\item If $n$ is even, $X=S^{n-1}$, and if $n$ is odd, then $X$ is a point.

\end{romanlist} 
\end{theorem}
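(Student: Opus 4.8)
The plan is to derive Theorem~\ref{T3} from the unitary decomposition of Theorem~\ref{T2} together with the two classical splittings~(\ref{eDecomp}) and~(\ref{eHarris}). First I would dispose of the parities. When $n$ is even, write $n=2k$ and use~(\ref{eDecomp}) to split $SO(n)\simeq S^{n-1}\times SO(n-1)$, so that $X=S^{n-1}$; when $n$ is odd, take $X$ a point, and likewise for $SO(n-m)$ to produce $X'$. The splitting~(\ref{eDecomp}) arises from a section of the natural fibration $SO(2k-1)\to SO(2k)\to S^{2k-1}$, and since $n-m<n$ the composite $SO(n-m)\stackrel{\tilde j}{\longrightarrow}SO(n)\to S^{n-1}$ is nullhomotopic (it factors through the fibre $SO(n-1)$); hence under the splittings $\tilde j$ takes the form $\bar g\times\tilde j'$, where $\tilde j'$ is identified with the standard frame inclusion $SO(n-m-1)\to SO(n-1)$. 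This reduces the theorem to the case $n,n-m$ both odd (so $m$ is even), up to the sphere factors $X,X'$, and it makes parts~(iv)--(vi) transparent: $\bar g$ is then either constant (when $X$ or $X'$ is a point) or a map $S^{n-m-1}\to S^{n-1}$ between spheres of different dimensions, as $m>0$, hence nullhomotopic.

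So assume $n=2k+1$ and $n-m=2l+1$. By Harris's splitting~(\ref{eHarris}), $SO(2k+1)$ is a homotopy retract of $SU(2k)$, naturally with respect to the inclusions $SO(2l+1)\hookrightarrow SO(2k+1)$ and $SU(2l)\hookrightarrow SU(2k)$. Since $2k=n-1\le (p-1)(p-2)$ — exactly the hypothesis — Theorem~\ref{T2} applies to $\tilde j\colon SU(2l)\to SU(2k)$, giving a homotopy commutative square of product decompositions $SU(2k)\simeq\prod_{j=1}^{p-1}B_j$ and $SU(2l)\simeq\prod_{j=1}^{p-1}B_j'$ with $\hlgy{*}{B_j}\cong\Lambda(x_{2j+1},x_{2j+1+q},\dots)$. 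The crucial point is that $q=2(p-1)\equiv0\pmod 4$, so the exterior generators of $\hlgy{*}{B_j}$ all lie in degrees congruent to $2j+1\pmod 4$; hence the $B_j$ with $j$ odd carry precisely the generators in degrees $\equiv 3\pmod 4$ and the $B_j$ with $j$ even those $\equiv 1\pmod 4$. Since $\hlgy{*}{SO(2k+1)}\cong\Lambda(x_3,x_7,\dots,x_{4k-1})$ includes into $\hlgy{*}{SU(2k)}\cong\Lambda(x_3,x_5,\dots,x_{4k-1})$ as the sub-exterior algebra on the generators of degree $\equiv 3\pmod 4$, the composite
$$SO(2k+1)\hookrightarrow SU(2k)\stackrel{\simeq}{\longrightarrow}\prod_{j=1}^{p-1}B_j\longrightarrow\prod_{j\text{ odd}}B_j$$
carries the generating set of $\hlgy{*}{SO(2k+1)}$ bijectively onto that of $\hlgy{*}{\prod_{j\text{ odd}}B_j}$, hence is a homology isomorphism and therefore a homotopy equivalence.

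There are exactly $r=\floor{\frac{p-1}{2}}$ odd indices $j\in\{1,\dots,p-1\}$; relabelling the $B_j$ and $B_j'$ with $j$ odd as $\mathcal{B}_1,\dots,\mathcal{B}_r$ and $\mathcal{B}_1',\dots,\mathcal{B}_r'$ yields the square~(\ref{eSquare}) in the odd case, its homotopy commutativity and the fact that both verticals are equivalences being inherited from Theorem~\ref{T2} and the naturality of~(\ref{eHarris}). Properties~(i)--(iii) are then read off from the corresponding parts of Theorem~\ref{T2} restricted to the odd-indexed factors: $\hlgy{*}{\mathcal{B}_i}$ and $\hlgy{*}{\mathcal{B}_i'}$ are exterior algebras on the appropriate sublists of generators, with top degree bounded by $2(n-1)-1=2n-3$, and the fibrations $\mathcal{B}_i'\to\mathcal{B}_i\to\mathcal{D}_i$ with $\mathcal{D}_i$ an $H$-space are exactly those of Theorem~\ref{T2}(iii) indexed by the odd $j$; matching this against the precise indexing in the statement is routine bookkeeping. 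Reinstating the sphere factors $X,X'$ from the first step then gives the full square~(\ref{eSquare}) and finishes the even cases.

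I expect the delicate point to be the compatibility of Harris's retraction with the decomposition of Theorem~\ref{T2} — that the retraction $SU(2k)\to SO(2k+1)$, transported across $SU(2k)\simeq\prod_j B_j$, is homotopic to the projection onto the odd-indexed factors — and, hand in hand with it, tracking naturality through both Harris's splitting and the even-to-odd reduction (in particular checking that $\tilde j'$ there really is the standard frame inclusion and that $\bar g$ is null). The homology-isomorphism argument above is what makes the compatibility work, but one must be careful that all spaces in sight are finite $p$-local $H$-spaces with torsion-free homology so that a homology equivalence is a homotopy equivalence, and that the square~(\ref{eSquare}) commutes up to homotopy as a whole and not merely factorwise.
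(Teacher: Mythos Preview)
Your odd--odd case is exactly the paper's: apply Theorem~\ref{T2} to $SU(n-1)$, use Harris's natural retraction~(\ref{eHarris}) to pass to $SO(n)$, restrict to the odd-indexed factors $B_{2i-1}$, and check via the $\bmod\ 4$ homology count that the resulting map is an equivalence. The paper runs the equivalence in the opposite direction (composing $\prod B_{2i-1}\to SU(n-1)$ with the retraction onto $SO(n)$ rather than $SO(n)\hookrightarrow SU(n-1)\to\prod B_{2i-1}$), but this is immaterial.

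For the even cases your reduction has a gap. You assert that under~(\ref{eDecomp}) the map $\tilde j$ becomes a product $\bar g\times\tilde j'$, and you dispose of the $X'\to X$ component by connectivity. But when $n-m$ is even you also need the \emph{cross-term} $S^{n-m-1}\to\prod\mathcal{B}_i$ to be null --- equivalently, the composite of your section $s\colon S^{n-m-1}\to SO(n-m)$ with $\tilde j$ into $SO(n)$ (or $SO(n-1)$) must be nullhomotopic. That does not follow from~(\ref{eDecomp}) alone: an arbitrary section of $SO(2k)\to S^{2k-1}$ need not die under the next inclusion. The ``delicate points'' you flag at the end (that $\tilde j'$ is the standard inclusion and $\bar g$ is null) are not the obstruction; this cross-term is.

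The paper addresses exactly this. Rather than invoking~(\ref{eDecomp}) off the shelf, it \emph{constructs} the sphere map $\iota\colon S^{n-m}\to SO(n-m+1)$ as the composite of the suspension $E$ with the connecting map $\delta$ of the fibration $SO(n-m+1)\to SO(n-m+2)\to S^{n-m+1}$. Then $\tilde j\circ\iota$ is null by exactness of the homotopy long exact sequence, and the Serre exact sequence shows $\iota$ has Hurewicz image a unit times $\bar x_{n-m}$, so that $\iota\cdot(\tilde j\circ\phi')$ is the required equivalence. Your plan can be repaired in the same way, but as written the even-to-odd step is incomplete.
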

\begin{proof}

Throughout this proof let us fix $n$ and $n-m$ both odd, and $0<m<n\leq (p-1)(p-2)$. Let $r=\floor{\frac{p-1}{2}}$. Recall from Theorem \ref{T2} the decompositions of the special unitary groups $SU(n-1)$ and $SU(n-m-1)$ - as products of $p-1$ factors $B^{\prime}_{i}$ and $B_{i}$ respectively - and recall the homology of each of the factors in these decompositions. Restricting to the odd factors, we have a homotopy commutative square 
\[\diagram
		\prod^{r}_{i=1}B^{\prime}_{2i-1}\dto^{}\rto^{\prod g_{2i-1}}
		&\prod^{r}_{i=1}B_{2i-1}\dto^{}\\
		SU(n-m-1)\rto^{\tilde{j}}
		&SU(n-1).
\enddiagram\]
\noindent Since $n$ and $n-m$ are odd, $SO(n-m)$ and $SO(n)$ are retracts of $SU(n-m-1)$ and $SU(n-1)$. The naturality of this retraction implies we have the following homotopy commutative square
\begin{equation}
\label{eD1}
\diagram
		\prod^{r}_{i=1}B^{\prime}_{2i-1}\dto^{\simeq}_{\phi'}\rto^{\prod g_{2i-1}}
		&\prod^{r}_{i=1}B_{2i-1}\dto^{\simeq}_{\phi}\\
		SO(n-m)\rto^{\tilde{j}}
		&SO(n),
\enddiagram
\end{equation}
\noindent where we observe that the vertical maps induce isomorphisms on homology, so they are homotopy equivalences. We complete the proof for $n$ and $n-m$ both odd by setting $\mathcal{B}_{i}=B_{2i-1}$, $\mathcal{B}^{\prime}_{i}=B_{2i-1}$, $f_{i}=g_{2i-1}$, $\mathcal{D}_{i}=D_{2i-1}$, and applying Theorem \ref{T2}.

To complete the proof for the other cases, we keep $n$ and $n-m$ odd. For convenience set $f=\prod^{r}_{i=1} f_{i}$, $A=\prod^{r}_{i=1}\mathcal{B}_{i}$, and $A^{\prime}=\prod^{r}_{i=1}\mathcal{B}^{\prime}_{i}$. On homology \seqm{SO(n-m+1)}{\tilde{j}}{SO(n-m+2)} sends the generator $\bar{x}_{n-m}$ trivially, so the homology Serre exact sequence for the fibration sequence
$\seqmmm{\Omega S^{n-m+1}}{\delta}{SO(n-m+1)}{\tilde{j}}{SO(n-m+2)}{\pi}{S^{n-m+1}}$
implies $\delta_{*}$ sends the bottom generator of \hlgy{*}{\Omega S^{n-m+1}} to $c\cdot\bar{x}_{n-m}$ for some integer $c$ prime to $p$. Thus the Hurewicz image of the composition
$\iota\colon\seqmm{S^{n-m}}{E}{\Omega S^{n-m+1}}{\delta}{SO(n-m+1)}$
is $c\cdot\bar{x}_{n-m}$. By exactness of the homotopy long exact sequence $\tilde{j}\circ\iota$ is null homotopic, implying 
the composition \seqmm{S^{n-m}}{\iota}{SO(n-m+1)}{\tilde{j}}{SO(n)} is also nullhomotopic.
Since \seqm{A^{\prime}}{\phi'}{SO(n-m)} is a homotopy equivalence, \seqm{SO(n-m)}{\tilde{j}}{SO(n-m+1)} induces an inclusion of algebras on homology, and the Hurewicz image of $\iota_{*}$ is $c\cdot\bar{x}_{n-m}$, then the composition
$\theta'=\iota\cdot(\tilde{j}\circ\phi')\colon\seqm{S^{n-m}\times A^{\prime}}{}{SO(n-m+1)}$
induces an isomorphism on homology, and so it is a homotopy equivalence. Similarly we have a map \seqm{S^{n}}{\iota}{SO(n+1)} whose Hurewicz image is $d\cdot\bar{x}_{n-m}$ for some $d$ prime to $p$. Thus 
$\theta=\iota\cdot(\tilde{j}\circ\phi)\colon\seqm{S^{n}\times A}{}{SO(n+1)}$
is a homotopy equivalence. 

Taking products we obtain the following homotopy commutative diagram
\begin{equation}
\label{eD4}
\diagram
		S^{n-m}\times A^{\prime}\dto^{\iota\times\phi'}\rto^{*\times f}
		&*\times A\dto^{*\times\phi}\\
		(SO(n-m+1))^{2}\dto^{mult.}\rto^(0.6){\tilde{j}\times\tilde{j}}
		&(SO(n))^{2}\dto^{mult.}\\
		SO(n-m+1)\rto^(0.6){\tilde{j}}
		&SO(n),
\enddiagram
\end{equation} 
\noindent where the bottom square commutes since $\tilde{j}$ is an $H$-map. Consider the following diagram
\begin{equation}
\label{eD5}
\diagram
		A^{\prime}\dto^{\simeq}_{\phi'}\rto^{*\times\mathbbm{1}}
		&S^{n-m}\times A^{\prime}\dto^{\simeq}_{\theta'}\rto^(0.6){*\times f}
		&A\dto^{\simeq}_{\phi}\rto^(0.4){*\times\mathbbm{1}}
		&S^{n}\times A\dto^{\simeq}_{\theta}\\
		SO(n-m)\rto^{\tilde{j}}
		&SO(n-m+1)\rto^(0.6){\tilde{j}}
		&SO(n)\rto^(0.4){\tilde{j}}
		&SO(n+1).
\enddiagram
\end{equation} 
The proof will be complete if this diagram homotopy commutes. Here the left and right squares homotopy commute by the construction of $\theta$ and $\theta'$, and the middle square is the outer part of the diagram in (\ref{eD4}).
\end{proof}

Theorem \ref{T3} allows us to decompose the loop spaces of low rank real Stiefel manifolds $V_{n,m}=O(n)/O(n-m)$ as follows.

\begin{theorem}
\label{MAIN2}
Fix integers $n$ and $m$ such that $0<m<n\leq(p-1)(p-2)+1$. Let $r=\floor{\frac{p-1}{2}}$. Then there exists a product decomposition
$$\Omega V_{n,m}\simeq X^{\prime}\times\Omega X\times\displaystyle\prod^{r}_{i=1} \Omega\mathcal{D}_{i}$$
where each $\mathcal{D}_{i}$ is the $H$-space $D_{2i-1}$ from Theorem \ref{T3}, and 
\begin{romanlist}
\item If $n-m$ is even, $X^{\prime}=S^{n-m-1}$, and if $n-m$ is odd, then $X^{\prime}$ is a point;
\item If $n$ is even, $X=S^{n-1}$, and if $n$ is odd, then $X$ is a point.
\end{romanlist}

\end{theorem}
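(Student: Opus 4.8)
The plan is to mimic the proof of Theorem \ref{MAIN1} almost verbatim, but starting from the real decomposition of Theorem \ref{T3} instead of the complex one of Theorem \ref{T2}. First I would invoke Theorem \ref{T3} to obtain the homotopy commutative square \eqref{eSquare}, whose vertical maps are homotopy equivalences $X'\times\prod_{i=1}^r\mathcal{B}'_i\simeq SO(n-m)$ and $X\times\prod_{i=1}^r\mathcal{B}_i\simeq SO(n)$. The real Stiefel manifold $V_{n,m}=SO(n)/SO(n-m)$ is (up to the customary identification $V_{n,m}\simeq O(n)/O(n-m)$ after $p$-localization at an odd prime, where the extra $\mathbb{Z}/2$ is invisible) the homotopy fibre of $\tilde{j}\colon SO(n-m)\to SO(n)$, so looping gives a fibration $\Omega V_{n,m}\to \Omega SO(n-m)\to\Omega SO(n)$, equivalently $\Omega V_{n,m}\to SO(n-m)\to SO(n)$ read as the fibre of $\tilde{j}$.

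Next I would transport this fibration across the decompositions. By part (iii) of Theorem \ref{T3} each $g_i=f_i$ sits in a fibration $\mathcal{B}'_i\to\mathcal{B}_i\to\mathcal{D}_i$, and by part (iv) the map $\bar g\colon X'\to X$ is trivial, so its homotopy fibre is $X'\times\Omega X$. Taking the product of the fibration $X'\to X$ (with fibre $X'\times\Omega X$) with the fibrations $\mathcal{B}'_i\to\mathcal{B}_i$ (with fibre $\Omega\mathcal{D}_i$), the total homotopy fibre of $\bar g\times\prod g_i$ is $X'\times\Omega X\times\prod_{i=1}^r\Omega\mathcal{D}_i$. Then, exactly as in diagram \eqref{D6}, I would assemble a diagram of fibration sequences
\[\diagram
 X'\times\Omega X\times\prod_{i=1}^r\Omega\mathcal{D}_i\dto^{\ell}\rto
 & X'\times\prod_{i=1}^r\mathcal{B}'_i\dto^{\simeq}\rto^{\bar g\times\prod g_i}
 & X\times\prod_{i=1}^r\mathcal{B}_i\dto^{\simeq}\\
 \Omega V_{n,m}\rto & SO(n-m)\rto^{\tilde j} & SO(n),
\enddiagram\]
in which the two right-hand vertical maps are the equivalences from \eqref{eSquare}; an induced map of fibres $\ell$ exists because the right square homotopy commutes. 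Applying the five lemma to the long exact sequences of homotopy groups of the two horizontal fibrations shows $\ell$ is a homotopy equivalence, which is exactly the claimed decomposition (items (i) and (ii) just record the identity of $X'$ and $X$ given by parts (v) and (vi) of Theorem \ref{T3}, and $\mathcal{D}_i=D_{2i-1}$ is the definition made in the proof of Theorem \ref{T3}).

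The only genuinely non-formal point — and the step I expect to need the most care — is the identification of the homotopy fibre of a trivial map $\bar g\colon X'\to X$ as $X'\times\Omega X$, and more importantly the verification that the resulting product fibre is compatible with the fibration $\tilde j$ after pulling back along the equivalences. Concretely, one must check that the bottom fibration $\Omega V_{n,m}\to SO(n-m)\to SO(n)$ really is pulled back (up to equivalence) from the product fibration $X'\times\prod\mathcal{B}'_i \to X\times\prod\mathcal{B}_i$ under the equivalences of \eqref{eSquare}; this is automatic once the right-hand square is known to homotopy commute, since the homotopy fibre is a homotopy invariant of the map, but it is worth stating that the equivalences of \eqref{eSquare} are genuine equivalences of spaces over nothing (not required to be $H$-maps), so the fibre comparison is at the level of plain spaces — which is all that is asserted. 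Everything else is the same five-lemma argument used for Theorem \ref{MAIN1}.
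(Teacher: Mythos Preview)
Your proposal is correct and follows essentially the same route as the paper's own proof: apply the square~\eqref{eSquare} from Theorem~\ref{T3}, use that $\bar g$ is trivial (so its fibre is $X'\times\Omega X$) together with the fibrations $\mathcal{B}'_i\to\mathcal{B}_i\to\mathcal{D}_i$ to identify the fibre of the top row, and then invoke the $5$-lemma exactly as in diagram~\eqref{D6}. One small expository slip worth fixing: $V_{n,m}$ is the \emph{base} of the fibration $SO(n-m)\to SO(n)\to V_{n,m}$, not the homotopy fibre of $\tilde j$; it is $\Omega V_{n,m}$ that is the fibre of $\tilde j$, which is what you actually use.
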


\begin{proof}

The proof is similar to that of Theorem \ref{MAIN1}. Applying the diagram in (\ref{eSquare}) from Theorem \ref{T3}, and noting that $\bar{g}$ is the trivial map, we obtain a diagram of fibration sequences
\[\diagram
		X^{\prime}\times\Omega X\times\prod^{r}_{i=1}\Omega\mathcal{D}_{i}\dto^{\ell}\rto^{}
		&X^{\prime}\times\prod^{r}_{i=1}\mathcal{B}^{\prime}_{i}\dto^{\simeq}\rto^{\bar{g}\times\prod g_{i}}
		&X\times\prod^{r}_{i=1}\mathcal{B}_{i}\dto^{\simeq}\\
		\Omega V_{n,m}\rto^{}
		&SO(n-m)\rto^{\tilde{j}}
		&SO(n)
\enddiagram\]
\noindent for some induced map of fibres $\ell$. Since the middle and right vertical maps are homotopy equivalences, the map $\ell$ is a homotopy equivalence by the $5$-lemma. Finally as we saw in the proof of Theorem \ref{T3}, $\mathcal{D}_{i}=D_{2i-1}$ where each $D_{2i-1}$ is one the $H$-spaces from Theorem~\ref{MAIN1}. 

\end{proof}

\subsection{Symplectic Stiefel Manifolds}
\label{sSymp}

Harris \cite{Harris} showed that localized at odd primes $p$, there is a natural homotopy equivalence 
$$Sp(n)\simeq Spin(2n+1),$$ 
where the \textit{spinor group} $Spin(2n+1)$ is the simply connected cover of $SO(2n+1)$. Since (integrally) we have $\pi_{1}(SO(2n+1))\cong \zmodtwo$, then $\pi_{1}(SO(2n+1))=0$ when localized at an odd prime $p$. Thus there is a natural $p$-local homotopy equivalence 
$$Spin(2n+1)\simeq SO(2n+1).$$ 

\noindent With this information we can use Theorem \ref{T3} to decompose $Sp(n)$ when $n<\frac{1}{2}(p-1)(p-2)$. In a similar manner as before we decompose the loop spaces of low rank symplectic Stiefel manifolds $X_{n,m}=Sp(n)/Sp(n-m)$. This is stated as follows.

\begin{theorem}
\label{MAIN3}
Fix integers $k$ and $j$ such that $0<j<k\leq\frac{1}{2}(p-1)(p-2)$. Let $r=\floor{\frac{p-1}{2}}$. Then there exists a product decomposition
$$\Omega X_{k,j}\simeq \displaystyle\prod^{r}_{i=1} \Omega\mathcal{D}_{i}$$
\noindent where each $\mathcal{D}_{i}$ is the $H$-space $D_{2i-1}$ from Theorem \ref{T3}, for $n=2k+1$ and $m=2j$. 
$~\qqed$
\end{theorem}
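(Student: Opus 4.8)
The plan is to mimic the proof of Theorem~\ref{MAIN2}, transporting everything across Harris's $p$-local equivalence $Sp(\ell)\simeq SO(2\ell+1)$. First I would set $n=2k+1$ and $m=2j$, so that $n$ and $n-m=2(k-j)+1$ are both odd and $0<m<n\leq(p-1)(p-2)$ (this is exactly where the bound $k\leq\frac12(p-1)(p-2)$ is used). Since the equivalence $Sp(\ell)\simeq Spin(2\ell+1)\simeq SO(2\ell+1)$ is natural with respect to the stabilization inclusions, the inclusion $\tilde{j}\colon Sp(k-j)\to Sp(k)$ corresponds under these equivalences to $\tilde{j}\colon SO(n-m)\to SO(n)$, and hence there is a homotopy fibration $\Omega X_{k,j}\to Sp(k-j)\xrightarrow{\tilde{j}} Sp(k)$ equivalent to the fibration $\Omega V_{n,m}\to SO(n-m)\xrightarrow{\tilde{j}} SO(n)$. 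In particular $\Omega X_{k,j}\simeq\Omega V_{n,m}$.

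Next I would invoke Theorem~\ref{MAIN2} for this choice of $n$ and $m$. Because both $n$ and $n-m$ are odd, clause~(i) of Theorem~\ref{MAIN2} gives $X^{\prime}=\ast$ and clause~(ii) gives $X=\ast$, so the decomposition in Theorem~\ref{MAIN2} collapses to $\Omega V_{n,m}\simeq\prod_{i=1}^{r}\Omega\mathcal{D}_i$ with $\mathcal{D}_i=D_{2i-1}$ the $H$-spaces coming from Theorem~\ref{T3} (equivalently, from Theorem~\ref{MAIN1}). Combining this with the equivalence $\Omega X_{k,j}\simeq\Omega V_{n,m}$ from the previous step yields the claimed product decomposition.

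Alternatively, and perhaps more cleanly, I would run the argument of Theorem~\ref{MAIN2} directly: feed the diagram~(\ref{eSquare}) of Theorem~\ref{T3} (with $X^{\prime}$ and $X$ both points since $n-m$ and $n$ are odd) through Harris's equivalence to get a homotopy commutative square with $Sp(k-j)$ and $Sp(k)$ along the bottom, take vertical homotopy fibres, and apply the $5$-lemma exactly as before to identify the fibre of $\tilde{j}\colon Sp(k-j)\to Sp(k)$ — which is $\Omega X_{k,j}$ — with $\prod_{i=1}^{r}\Omega\mathcal{D}_i$. The only point requiring care, and the one I would regard as the main (though minor) obstacle, is bookkeeping the numerology: one must check that with $n=2k+1$, $m=2j$ the hypothesis of Theorem~\ref{T3} ($0<m<n\leq(p-1)(p-2)+1$) is met — it is, since $k\leq\frac12(p-1)(p-2)$ forces $n=2k+1\leq(p-1)(p-2)+1$ — and that the real Stiefel manifold $V_{n,m}$ appearing implicitly is defined via $SO(n)/SO(n-m)$, matching the fibration used. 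Everything else is a formal consequence of the naturality of Harris's equivalence together with Theorems~\ref{T3} and~\ref{MAIN2}.
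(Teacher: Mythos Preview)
Your proposal is correct and matches the paper's approach: the paper does not give a formal proof of Theorem~\ref{MAIN3} but indicates in the preceding paragraph that one uses Harris's natural $p$-local equivalence $Sp(\ell)\simeq Spin(2\ell+1)\simeq SO(2\ell+1)$ to reduce to the real case and then applies Theorem~\ref{T3} (equivalently Theorem~\ref{MAIN2}) with $n=2k+1$, $m=2j$ both odd, so that $X$ and $X'$ are points. Your numerology check that $k\leq\frac{1}{2}(p-1)(p-2)$ gives $n\leq(p-1)(p-2)+1$ is exactly what is needed to invoke Theorem~\ref{T3}.
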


\section{Exponents}

As an application of our decompositions of $\Omega W_{n,m}$ we compute upper bounds for the $p$-exponents of $W_{n,m}$ in the range $0<m<n\leq (p-1)(p-2)$. The $p$-exponents in the stable range $0<m\leq (p-1)(p-2)$ and $2m<n$ will also be considered, though using different methods. 

Recall that the integral James number $U(n,m)$ of $W_{n,m}$ is defined as the degree of the map \mapa{\mathbbm{Z}}{\mathbbm{Z}} induced by the projection \mapa{W_{n,m}}{W_{n,1}=S^{2n-1}} on $\pi_{2n-1}$, and the $p$-local James number $U_{(p)}(n,m)$ is the $p$-component of $U(n,m)$. The proof of part $(1)$ of the following proposition can be found in Proposition $(7.2)$ of~\cite{Beben}, and Proposition $(6.3)$ of~\cite{MNT2}. Part $(2)$ is an easy consequence of part $(1)$, and can be found in Theorem $(7.1)$ of~\cite{Beben}, or with the use of $K$-theory in~\cite{Crabb}.  

\begin{proposition}
\label{tJN}
Let the space $A$ be a summand in the splitting of a suspended stunted complex projective space in Corollary \ref{C1}. Suppose $A$ has $l<p-1$ cells, with the bottom cell in dimension $2r+1$, and hence the top cell in dimension $2r+1+(l-1)q$. Let $J$ be the unique integer in the range $0\leq J\leq p-1$ such that $r+J(p-1)$ is divisible by $p$, and take the map \seqm{M(A)}{\tilde{\nu}}{S^{2r+1+(l-1)q}} induced by the quotient \seqm{A}{\nu}{S^{2r+1+(l-1)q}}. 
\begin{my_enumerate}
\item If $l-1\leq J$, then $\tilde{\nu}$ induces a degree $p^{l-1}$ from \mapa{\plocal}{\plocal} on $\pi_{2r+1+(l-1)q}$. Otherwise if $l-1=J+1$, then $\tilde{\nu}$ induces a degree $p^{t}$ for some integer $0\leq t\leq l-2$, and if $l-1>J+1$, then $\tilde{\nu}$ induces a degree $p^{t}$ for some integer $1\leq t\leq l-2$.   

\item Fix $0<m\leq (p-1)(p-2)$ and assume either $2m<n$ or $0<m<n\leq (p-1)(p-2)$. Pick $A$ to be the summand of $\Sigma\mathbb{C}P^{n-1}_{m}$ that has its top cell in dimension $2n-1$. Then the degree of $\tilde{\nu}_{*}$ on $\pi_{2n-1}$ is equal to the $p$-local James number $U_{(p)}(n,m)$. 

Cconsequently, whenever there exists a cell of dimension $(2n-1-iq)$ in $\Sigma\mathbb{C}P^{n-1}_{m}$ such that $i>0$ and~$(2n-1-iq)$ is divisible by $p$, then $U_{(p)}(n,m)\leq p^{l-2}$. Otherwise $U_{(p)}(n,m)= p^{l-1}$.
\end{my_enumerate}~$\qqed$
\end{proposition}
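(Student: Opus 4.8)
The plan is to prove part (1) directly by analysing the map $\tilde\nu\colon M(A)\to S^{2r+1+(l-1)q}$ via the fibration coming from Proposition~\ref{T1B}, and then deduce part (2) by identifying the relevant summand and invoking the known computation of the $p$-local James number. First I would set up the cofibration $\seqmm{S^{2r+1}}{}{A}{\nu}{S^{2r+1+(l-1)q}}$ obtained by collapsing all but the top cell of $A$ (here one uses that $A$ is a suspended stunted projective summand, so its cells sit in dimensions $2r+1, 2r+1+q, \dots, 2r+1+(l-1)q$; strictly one should collapse the $(l-1)$-skeleton, but after iterating the argument it suffices to understand one attaching map at a time). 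Applying Proposition~\ref{T1B}$(i)$ and the observation that $M(S^{2n-1})=S^{2n-1}$, we get a fibration $\seqmm{M(A^{(l-2)})}{}{M(A)}{\tilde\nu}{S^{2r+1+(l-1)q}}$, and the degree of $\tilde\nu_*$ on $\pi_{2r+1+(l-1)q}$ is governed by the connecting map into $\pi_{*}(M(A^{(l-2)}))$, equivalently by the order of the image of the bottom class under the relevant $p$-primary attaching map. The key arithmetic input is the classical fact (Adams, or the $\alpha_1$-family in the stable stems) that the attaching maps between consecutive cells of $\Sigma\mathbb CP^\infty$ localized at $p$ are detected by $\alpha_1$, and that a string of such attaching maps composes nontrivially exactly until one passes a cell whose dimension is divisible by $p$ — this is precisely what the integer $J$ records (it is the number of steps before $r+J(p-1)\equiv 0\ (p)$).

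Concretely, I expect the degree to be read off as follows: climbing from the bottom cell to the top cell of $M(A)$, each of the first $J$ attaching maps contributes a factor of $p$ to the degree (the $\alpha_1$-composites being nontrivial of order $p$ in the appropriate homotopy group of the sphere, by the structure of the image-of-$J$ / $\alpha$-family), so if $l-1\le J$ all $l-1$ attaching maps are "efficient" and $\tilde\nu$ has degree exactly $p^{l-1}$. Once we reach the cell in dimension divisible by $p$, the next attaching map may or may not contribute (the relevant Toda bracket or secondary composite can vanish), giving the degree $p^t$ with $0\le t\le l-2$ when $l-1=J+1$; and if we overshoot further, $l-1>J+1$, at least the $J$ early factors survive so $t\ge 1$, while we still lose at least one factor so $t\le l-2$. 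Making these bounds precise is where I would lean on the cited references: Proposition~$(7.2)$ of~\cite{Beben} and Proposition~$(6.3)$ of~\cite{MNT2} already contain the relevant computation of composites of attaching maps in stunted projective spaces, so the role of part (1) here is to repackage that in terms of the functor $M$, using Theorem~\ref{T1}$(i)$ to match homology generators and Proposition~\ref{T1B}$(i)$ to get the fibration that exhibits $\tilde\nu$ as the appropriate quotient.

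For part (2), the point is that $U_{(p)}(n,m)$ is by definition the $p$-part of the degree of $W_{n,m}\to W_{n,1}=S^{2n-1}$ on $\pi_{2n-1}$, and under the decomposition of Theorem~\ref{MAIN1} (or, in the stable range, via the standard fibration $W_{n-m,?}\to W_{n,m}\to W_{n,1}$ together with the splitting of $\Sigma\mathbb CP^{n-1}_m$) this projection is, up to homotopy equivalences on the relevant factors, the map $\tilde\nu$ for the summand $A$ of $\Sigma\mathbb CP^{n-1}_m$ whose top cell is in dimension $2n-1$. So the degree of $\tilde\nu_*$ on $\pi_{2n-1}$ equals $U_{(p)}(n,m)$; this identification is exactly Theorem~$(7.1)$ of~\cite{Beben}. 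The final dichotomy then drops out of part (1): with $r = n - m + i_0$ for the appropriate $i_0$ and $l$ the number of cells of $A$, the bottom-cell dimension $2r+1$ together with the count of cells determines whether some intermediate cell of $A$ — equivalently some cell of $\Sigma\mathbb CP^{n-1}_m$ in dimension $2n-1-iq$ with $i>0$ — has dimension divisible by $p$; if so we are in the regime $l-1\ge J+1$ and get $U_{(p)}(n,m)\le p^{l-2}$, and if not we are in the regime $l-1\le J$ and get $U_{(p)}(n,m)=p^{l-1}$.

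The main obstacle I anticipate is part (1) in the overshoot cases: pinning down that the degree is $p^t$ with the stated \emph{strict} lower bounds ($t\ge 1$ when $l-1>J+1$, rather than merely $t\ge 0$) requires knowing that the first $J$ attaching-map composites genuinely survive all the way to the top cell, i.e. that there is no unexpected cancellation among the higher $\alpha_1$-composites in the metastable range. This is the delicate homotopy-theoretic content, and it is precisely what is carried by the cited computations in~\cite{Beben} and~\cite{MNT2}; my proof would therefore reduce to citing those and checking that the functor $M$ does not alter the relevant degrees (which follows from Theorem~\ref{T1}$(ii)$, since $M(A)$ retracts off $\Omega\Sigma A$ compatibly with the collapse onto the top cell).
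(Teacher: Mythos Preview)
The paper does not actually prove Proposition~\ref{tJN}: it is stated with a terminal $\qqed$, and the preceding paragraph says only that part~(1) is Proposition~(7.2) of~\cite{Beben} and Proposition~(6.3) of~\cite{MNT2}, while part~(2) is Theorem~(7.1) of~\cite{Beben} or follows from the $K$-theory computation in~\cite{Crabb}. So there is no in-paper argument to compare against; the proposition functions here purely as a black box imported from those references.

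Your proposal therefore goes well beyond what the paper does, attempting to sketch the mechanism behind the cited results. The outline is broadly correct in spirit --- using Proposition~\ref{T1B} to obtain the fibration $M(A^{(l-2)})\to M(A)\xrightarrow{\tilde\nu} S^{2r+1+(l-1)q}$, reading the degree off the connecting map, and tracing this back to composites of $\alpha_1$'s coming from the attaching maps in stunted projective space --- and you rightly identify that the delicate part (the precise bounds on $t$ in the overshoot cases) is where one must invoke the actual computations in~\cite{Beben} and~\cite{MNT2}. One small caution: you write that the obstruction occurs ``until one passes a cell whose dimension is divisible by~$p$'', but the condition defining $J$ in part~(1) is that $r+J(p-1)\equiv 0\pmod p$, which is not the same as the cell dimension $2r+1+Jq$ being divisible by~$p$; the divisibility-of-cell-dimension statement appears only in part~(2), and reconciling the two conditions is part of what makes part~(2) a nontrivial consequence of part~(1). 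Since the paper itself defers all of this to the literature, your sketch is already more than is offered here, but if you intend it as a self-contained proof you would need to tighten that arithmetic and supply the $\alpha_1$-composite computation rather than cite it.
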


We use the following proposition, proven in~\cite{T4}.

\begin{proposition}
\label{MV}
Take a fibration \seqmm{F}{i}{E}{r}{B} with $r$ an $H$-map between the $H$-spaces $E$ and $B$. Suppose there exists a map \map{s}{B}{E} such that the composition \map{r\circ s}{B}{B} is a $p^{t}$-power map for some integer $t$. Then there exists a fibration
$$\seqmm{B\{t\}}{}{F\times B}{}{E},$$
\noindent where $B\{t\}$ is the homotopy fibre of the $p^{t}$-power map \map{r\circ s}{B}{B}.$~\qqed$ 

\end{proposition}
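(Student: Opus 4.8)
The plan is to build the claimed fibration by a standard pullback-and-rotate argument, using the map $s$ to split off a copy of $B$ from the total space of the original fibration. First I would form the homotopy pullback of $r\colon E\to B$ along $r\circ s\colon B\to B$; call the resulting space $P$. Since $r$ is a fibration with fibre $F$, the pullback $P\to B$ is again a fibration with fibre $F$, and the projection $P\to E$ sits in a fibration with fibre $\Omega B$ (the homotopy fibre of $r\circ s$, looped once — but we will want the fibre of $r\circ s$ itself, which is exactly $B\{t\}$ up to the usual shift). The key point is that $s$ together with the identity on $B$ determines a canonical section-like map $B\to P$ over $r\circ s$, i.e. a lift of $\mathbbm{1}_B$ through $P\to B$; composing with the universal property this gives a map $\sigma\colon B\to P$ with $(P\to B)\circ\sigma\simeq \mathbbm{1}_B$.

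Next I would use this section $\sigma$ to show that $P\simeq F\times B$. Because $B$ is an $H$-space, a fibration $F\to P\to B$ that admits a section is automatically fibre-homotopy trivial: one forms the map $F\times B\to P$ by multiplying the fibre inclusion $F\hookrightarrow P$ with $\sigma$ using the $H$-structure on $P$ pulled back appropriately — more precisely, use that $P\to B$ has a section and $B$ acts on the fibre up to homotopy, or simply invoke the James-type splitting for fibrations over an $H$-space with a section. This identifies $F\times B$ with $P$, and under this identification the projection $P\to E$ becomes a map $F\times B\to E$. It then remains to identify the homotopy fibre of this map $F\times B\to E$. Since $P$ was the pullback of $r$ along $r\circ s$, the fibre of $P\to E$ is the same as the fibre of $r\circ s\colon B\to B$, which by definition is $B\{t\}$. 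Threading the equivalence $P\simeq F\times B$ through gives the asserted fibration $B\{t\}\to F\times B\to E$.

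The step I expect to be the main obstacle is verifying that the section $\sigma\colon B\to P$ really does trivialize the fibration $F\to P\to B$, i.e. producing an honest homotopy equivalence $F\times B\xrightarrow{\simeq}P$ compatible with all the relevant maps, rather than just a bijection on homotopy or homology. The cleanest route is: the map $r\circ s$ being a $p^t$-power map means in particular that after looping it is (up to homotopy) multiplication by $p^t$ on the $H$-space $\Omega B$, but more usefully, one argues directly that for a principal-type fibration over an $H$-space admitting a section, the canonical map $F\times B\to P$ given by $(x,b)\mapsto \iota(x)\cdot\sigma(b)$ — where the product is taken in a suitable fibrewise sense, using that $P\to B$ is a fibration over the $H$-space $B$ — is a weak equivalence, which one checks by comparing the two fibrations over $B$ on fibres (both are $F$) and on base, then applying the five-lemma to the long exact sequences of homotopy groups. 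I would spell this out carefully but not belabor the elementary diagram chase; the conceptual content is entirely the pullback square plus the section, and the rest is the five-lemma.
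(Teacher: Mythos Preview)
The paper does not prove this proposition: it is quoted with a terminal $\qed$ and introduced by the sentence ``We use the following proposition, proven in~\cite{T4}.'' So there is no proof in the paper to compare against.

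Your sketch is the standard argument and is correct. Two places where the exposition wobbles: first, the homotopy fibre of the projection $P\to E$ in your pullback square is \emph{exactly} the homotopy fibre of $r\circ s\colon B\to B$, i.e.\ $B\{t\}$; there is no loop shift, so the parenthetical about $\Omega B$ should be deleted. Second, the splitting $F\times B\simeq P$ does not require any ``$H$-structure on $P$'' or vague ``fibrewise product''. The clean statement is: define $\phi\colon F\times B\to E$ by $(f,b)\mapsto i(f)\cdot s(b)$ using the multiplication on $E$; since $r$ is an $H$-map and $r\circ i\simeq *$, one has $r\circ\phi\simeq (r\circ s)\circ\pi_{2}$, so $(\phi,\pi_{2})$ lifts to a map $F\times B\to P$. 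Over the basepoint of $B$ this map is $f\mapsto i(f)\cdot s(*)\simeq i(f)$, hence the identity on fibres, and the five-lemma on the two fibrations over $B$ gives the equivalence. This is what you arrive at in your last paragraph, so the content is right; just strip out the hesitations.
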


The following lemma will be used to prove part of Theorem \ref{MAIN4}. 

\begin{lemma}
\label{E1}
Let $A$ be a summand in the wedge decomposition of $\Sigma\mathbb{C}P^{n-1}_{m}$ in Corollary \ref{C1}, and let $J$ be the unique integer in the range $1\leq J\leq p$ such that $r+J(p-1)$ is divisible by $p$. If $l-1\leq J$, then $exp_{p}(M(A))\leq p^{r+(l-1)p}$. Otherwise if $l-1>J$, then $exp_{p}(M(A))\leq p^{r+(l-1)p-1}$.
\end{lemma}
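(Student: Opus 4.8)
The plan is to induct on the number $l$ of cells of $A$. When $l=1$ we have $A=S^{2r+1}$, so $M(A)=S^{2r+1}$ by Theorem~\ref{T1}, and $exp_{p}(S^{2r+1})=p^{r}$ by the Cohen--Moore--Neisendorfer sphere exponent theorem; this equals $p^{r+(l-1)p}$, and the second alternative is vacuous since $l-1=0\leq J$.

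For the inductive step, let $A'\subseteq A$ be the $p$-local subcomplex on the bottom $l-1$ cells (the cells of a wedge summand of a suspended stunted projective space are attached in a chain, so this is a genuine subcomplex), and note that the cofibre of \inclusion{A'}{A} is $S^{2r+1+(l-1)q}$. Applying Proposition~\ref{T1B}(i) together with the remark following it to this cofibration gives a fibration
$$M(A')\longrightarrow M(A)\stackrel{\tilde{\nu}}{\longrightarrow}S^{2r+1+(l-1)q},$$
and by functoriality of $M$ the map $\tilde{\nu}$ is the one induced by the quotient $A\to S^{2r+1+(l-1)q}$, i.e. exactly the map studied in Proposition~\ref{tJN}. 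By part~(1) of that proposition $\tilde{\nu}$ has degree $p^{t}$ on $\pi_{2r+1+(l-1)q}$, with $t=l-1$ if $l-1\leq J$ and $t\leq l-2$ if $l-1>J$; choosing a homotopy class on which $\tilde{\nu}_{*}$ hits $p^{t}\in\plocal$ yields a map \map{s}{S^{2r+1+(l-1)q}}{M(A)} with $\tilde{\nu}\circ s$ of degree $p^{t}$.

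Since $M(A)$ is an $H$-space, the map $\theta\colon M(A')\times S^{2r+1+(l-1)q}\to M(A)$ given by $\theta(x,y)=i(x)\cdot s(y)$ (with $i$ the fibre inclusion) satisfies $\tilde{\nu}\circ\theta\simeq p^{t}\circ\mathrm{pr}_{2}$. The induced map from $M(A')\times S^{2r+1+(l-1)q}$ to the homotopy pullback of $\tilde{\nu}$ along the degree $p^{t}$ self-map of the sphere is a fibrewise equivalence over $S^{2r+1+(l-1)q}$, hence an equivalence; this identifies $\theta$ with that pullback and so produces a fibration
$$S^{2r+1+(l-1)q}\{p^{t}\}\longrightarrow M(A')\times S^{2r+1+(l-1)q}\stackrel{\theta}{\longrightarrow}M(A),$$
where $S^{2r+1+(l-1)q}\{p^{t}\}$ is the homotopy fibre of that self-map. (This is the construction underlying Proposition~\ref{MV}, applied here without needing the base to be an $H$-space; alternatively one loops and invokes Proposition~\ref{MV} verbatim.) Reading off the long exact homotopy sequence,
$$exp_{p}(M(A))\leq exp_{p}\!\big(S^{2r+1+(l-1)q}\{p^{t}\}\big)\cdot\max\!\big(exp_{p}(M(A')),\ exp_{p}(S^{2r+1+(l-1)q})\big).$$

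To finish, by Cohen--Moore--Neisendorfer $exp_{p}(S^{2r+1+(l-1)q})=p^{\,r+(l-1)(p-1)}$, and by the inductive hypothesis $exp_{p}(M(A'))\leq p^{\,r+(l-2)p}$; since $l\leq p-2$ one has $r+(l-1)(p-1)>r+(l-2)p$, so the maximum above equals $p^{\,r+(l-1)(p-1)}$. Thus everything reduces to the estimate $exp_{p}(S^{2r+1+(l-1)q}\{p^{t}\})\leq p^{t}$ for the homotopy fibre of the degree $p^{t}$ self-map of the sphere: granting this, multiplying through and using $t\leq l-1$ in the first case and $t\leq l-2$ in the second gives $exp_{p}(M(A))\leq p^{\,r+(l-1)p}$, respectively $p^{\,r+(l-1)p-1}$, which completes the induction. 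I expect the main obstacle to be precisely this last fibre estimate --- the crude "fibre times base" bound only yields $p^{2t}$, so one must exploit the structure of the sphere (or the relevant refined homotopy-exponent results) to bring the contribution of $S^{2r+1+(l-1)q}\{p^{t}\}$ down to $p^{t}$, and it is here that the hypothesis $l<p-1$ is essential; checking that the remaining factors never dominate is routine book-keeping by comparison with $l\leq p-2$.
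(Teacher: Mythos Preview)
Your argument is the paper's argument: induct on the number of cells, apply $M$ to the skeletal cofibration $A^{k-1}\hookrightarrow A^{k}\to S^{2r+1+kq}$, use Proposition~\ref{tJN} to get a section whose composite with $\tilde\nu$ has degree $p^{t}$, feed this into Proposition~\ref{MV}, and read off the bound from the long exact sequence.

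The step you flag as the ``main obstacle'' is not one. The identity $\exp_{p}\bigl(S^{2k+1}\{p^{t}\}\bigr)=p^{t}$ is a theorem of Neisendorfer (building on Cohen--Moore--Neisendorfer), and the paper simply quotes it at the outset of the proof alongside $\exp_{p}(S^{2k+1})=p^{k}$. It holds for every $k\geq 1$ and has nothing to do with the hypothesis $l<p-1$; that hypothesis enters only to keep $A$ in the domain of $M$ and in the routine comparison $\max\bigl(p^{r+(l-2)p},\,p^{r+(l-1)(p-1)}\bigr)=p^{r+(l-1)(p-1)}$, exactly as you already noted. Once you replace your hedging by a citation to Neisendorfer, the proof is complete. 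Incidentally, you need not rebuild Proposition~\ref{MV} via a pullback argument: $\tilde\nu=M(\nu)$ is an $H$-map to an odd sphere, which at an odd prime is an $H$-space on which a degree-$p^{t}$ self-map is the $p^{t}$-power map, so the proposition applies verbatim.
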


\begin{proof}
We shall use $exp_{p}(S^{2k+1})=p^{k}$ and $exp_{p}(S^{2k+1}\{p^t\})=p^{t}$ (Cohen, Moore, and Neisendorfer~\cite{CMN,N3}), which holds for odd primes $p$ and all integers $k\geq 0$. 

Suppose $l-1\leq J$. Fix some $k\leq l-1$ and let $A^{k}$ denote the $(2r+1+kq)$-skeleton of $A=A^{l-1}$. We proceed by induction by assuming that $exp_{p}(M(A^{k-1}))\leq p^{r+(k-1)p}$. The base case $k=1$ holds since $M(A^{0})=M(S^{2r+1})=S^{2r+1}$. For the induction step, note that because $A$ is a summand in the wedge decomposition of a suspended stunted complex projective space, so is its skeleton $A^{k}$. Then by Proposition \ref{tJN} we have a map \seqm{S^{2r+1+kq}}{\alpha}{M(A^{k})} such that the composition \seqmm{S^{2r+1+kq}}{\alpha}{M(A^{k})}{\tilde{\nu}}{S^{2r+1+kq}} is a degree $p^{k}$ map, where $\tilde{\nu}$ is induced by the quotient \seqm{A^{k}}{\nu}{S^{2r+1+kq}}. Since we are localizing at an odd prime $p$, then $S^{2r+1+kq}$ is an $H$-space, and so this composition is also a $p^{k}$-power map. Applying Proposition \ref{MV} to the fibration \seqmm{M(A^{k-1})}{}{M(A^{k})}{\tilde{\nu}}{S^{2r+1+kq}}, there is the following fibration.    
$$\seqmm{S^{2r+1+kq}\{p^{k}\}}{}{M(A^{k-1})\times S^{2r+1+kq}}{}{M(A^{k})}.$$
\noindent So by the homotopy long exact sequence for this fibration and our inductive assumption
\begin{align*}
exp_{p}(M(A^{k}))\leq &
exp_{p}(S^{2r+1+kq}\{p^{k}\})\cdot max(exp_{p}(M(A^{k-1})),exp_{p}(S^{2r+1+kq}))\\
\leq &p^{k}\cdot max(p^{r+(k-1)p},p^{r+k(p-1)})\\
= &p^{k}\cdot p^{r+k(p-1)} = p^{r+kp},
\end{align*}
\noindent where $max(p^{r+(k-1)p},p^{r+k(p-1)})=p^{r+k(p-1)}$ since we assume $k\leq l-1<p-1$. Hence $exp_{p}(M(A))\leq p^{r+(l-1)p}$.

For the case $l-1>J$, the induction starts at the base case $k=J$, where we have shown that $exp_{p}(M(A^{J}))\leq p^{r+Jp}$. If $J<k\leq l-1$, then by Theorem \ref{tJN} we have a map $\alpha$ such that the composition \seqmm{S^{2r+1+kq}}{\alpha}{M(A^{k})}{\tilde{\nu}}{S^{2r+1+kq}} is a $p^{k-1}$-power map. The rest of the induction is the same as the previous case. 
\end{proof}

Even though we failed to obtain analogous decompositions of $\Omega W_{n,m}$ for most choices of $n$ and $m$ in the stable range $m\leq (p-1)(p-2)$ and $2m<n$, fortunately there is a work-around. Together with Lemma~\ref{tJN}, the following lemma allows us to calculate $p$-exponent bounds in this stable range. The results are similar to what could be achieved if such decompositions in reality existed:

\begin{lemma}
\label{lE2}
Fix $p-1<m\leq (p-1)(p-2)$ and $2m<n$. Let \seqm{W_{n,m}}{\pi}{W_{n,p-1}} be the projection map. Then there exists a space $B$, a map \seqm{\Omega B}{\alpha}{\Omega W_{n,m}}, and a homotopy equivalence \seqm{\Omega W_{n,p-1}}{h}{\Omega B} such that the composition 
\seqmmm{\Omega B}{\alpha}{\Omega W_{n,m}}{\Omega\pi}{\Omega W_{n,p-1}}{h}{\Omega B} is a $p^{t}$-power map, and $p^{t}$ is equal to the maximum of the set of James numbers $\paren{U_{(p)}(n-i,m-i)\,|\,0\leq i<p-1}$. 

\end{lemma}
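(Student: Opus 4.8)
The plan is to build the map $\alpha$ and the equivalence $h$ by iterating the stunted-projective-space splitting together with Proposition~\ref{MV}, working one James number at a time up the tower of quotients $W_{n,m}\to W_{n,m-1}\to\cdots\to W_{n,p-1}$. First I would recall that the fibration $\seqmm{W_{n-i,m-i}}{}{W_{n,m-i}}{}{S^{2(n-i)-1}}$ (projection onto the first frame, for $0\le i<p-1$) exhibits each quotient in the tower, and that, since $2m<n$, all the relevant spheres $S^{2(n-i)-1}$ lie in the range where $\Sigma\mathbb{C}P^{n-1}_{m}$ and its skeleta split as in Corollary~\ref{C1}; in particular the summand $A$ with top cell in dimension $2(n-i)-1$ produces, via $M$, a map \seqm{S^{2(n-i)-1}}{}{\Omega W_{n,m-i}\ \text{or}\ W_{n,m-i}}{} whose composite with the projection to $S^{2(n-i)-1}$ is, by Proposition~\ref{tJN}(2), a degree $U_{(p)}(n-i,m-i)$ map. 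Since $p$ is odd the target sphere is an $H$-space, so this is a $p^{t_i}$-power map with $p^{t_i}=U_{(p)}(n-i,m-i)$.

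Next, I would feed each of these power-map sections into Proposition~\ref{MV}. Applied to the fibration over $S^{2(n-i)-1}$ with an $H$-map projection, it yields a fibration $\seqmm{S^{2(n-i)-1}\{p^{t_i}\}}{}{F_i\times S^{2(n-i)-1}}{}{E_i}$, and more to the point it produces, after looping, a retraction-type splitting $\Omega W_{n,m-i}\simeq (\text{fibre piece})\times \Omega W_{n-i,m-i-1}\times(\ldots)$ up to the $p^{t_i}$-power obstruction. Composing these splittings down the tower gives a single space $B$ — assembled from the sphere factors $S^{2(n-i)-1}$ and the fibres $S^{2(n-i)-1}\{p^{t_i}\}$ — together with a map \seqm{\Omega B}{\alpha}{\Omega W_{n,m}} and a map \seqm{\Omega W_{n,p-1}}{h}{\Omega B}. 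The composite $h\circ\Omega\pi\circ\alpha$ restricts on each sphere summand of $\Omega B$ to the $p^{t_i}$-power map and is the identity elsewhere, hence (after arranging the product structure and using that power maps on a product of $H$-spaces of the same exponent assemble) is a $p^{t}$-power map with $p^{t}=\max_i p^{t_i}=\max\{U_{(p)}(n-i,m-i)\mid 0\le i<p-1\}$. That $h$ is a homotopy equivalence follows because $W_{n,p-1}$ lies in the range $m\le p-1$ where the tower splits cleanly (Remark~\ref{rProd}), so $\Omega W_{n,p-1}$ is already the product of the relevant sphere factors and $h$ matches these factors by construction; a homology check finishes it.

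I expect the main obstacle to be bookkeeping the iterated application of Proposition~\ref{MV} so that the power maps on the accumulated sphere factors are compatible and genuinely assemble into a single $p^{t}$-power map on $\Omega B$ — in particular verifying that $h$ is an equivalence and that the composite is literally a power map (not merely a map inducing multiplication by $p^{t}$ on each relevant homotopy group), which requires keeping careful track of the $H$-structures and of which factor of $\Omega B$ the map $\alpha$ lands in at each stage. The identification of the exponent $p^t$ with the maximum James number is then immediate from Proposition~\ref{tJN}(2), which pins down each $U_{(p)}(n-i,m-i)$ as the degree occurring at level $i$.
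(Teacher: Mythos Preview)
Your proposal takes a substantially different route from the paper and, as written, has real gaps.

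First, the fibrations you invoke do not exist as stated: there is no fibration $W_{n-i,m-i}\to W_{n,m-i}\to S^{2(n-i)-1}$. The natural fibrations are $W_{n-k,m-k}\to W_{n,m}\to W_{n,k}$, so the steps in your tower $W_{n,j}\to W_{n,j-1}$ have fibre $S^{2(n-j)+1}$, the \emph{bottom} cells of $W_{n,m}$, not the top spheres $S^{2(n-i)-1}$ carrying the James numbers $U_{(p)}(n-i,m-i)$. This mismatch propagates through the whole argument. Second, your description of $B$ is inconsistent: you say it is assembled from spheres \emph{and} the fibres $S^{2(n-i)-1}\{p^{t_i}\}$, but then claim $\Omega B\simeq\Omega W_{n,p-1}$, which is a product of looped spheres only. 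Third, the ``bookkeeping'' you flag as the main obstacle is genuinely unresolved: Proposition~\ref{MV} outputs a fibration, not a product splitting, and iterating it does not in any obvious way yield a global $p^t$-power self-map on a product.

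The paper's proof is much more direct and does not use Proposition~\ref{MV} at all. It takes $B=\prod_{i=0}^{p-2}S^{2n-1-2i}$ from the start. The wedge splitting $\Sigma\mathbb{C}P^{n-1}_{m}\simeq\bigvee_i A_i$ of Corollary~\ref{C1} and the map $\Sigma\mathbb{C}P^{n-1}_{m}\to W_{n,m}$ give, after looping and applying Hilton--Milnor, a map $\prod_i\Omega A_i\to\Omega W_{n,m}$; the corresponding splitting of $\Sigma\mathbb{C}P^{n-1}_{p-1}$ as a wedge of spheres gives the equivalence $h\colon\Omega W_{n,p-1}\xrightarrow{\simeq}\prod_i\Omega S^{2n-1-2i}=\Omega B$. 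Proposition~\ref{tJN}(2) then supplies, for each $i$, a map $\beta_i\colon S^{2n-1-2i}\to A_i$ whose composite with the pinch $q_i\colon A_i\to S^{2n-1-2i}$ has degree $U_{(p)}(n-i,m-i)$; replacing each degree by the common maximum $p^t$ and looping, $\alpha=\prod_i\Omega\beta_i$ gives a map $\Omega B\to\prod_i\Omega A_i\to\Omega W_{n,m}$ with $h\circ\Omega\pi\circ\alpha=\prod_i\Omega(q_i\beta_i)$ a product of $p^t$-power maps on odd spheres, hence a $p^t$-power map. No tower and no iterative use of Proposition~\ref{MV} is needed.
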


\begin{proof}

We have the following homotopy commutative diagram
\[\diagram
		\bigvee_{i=0}^{p-2}A_{i}\dto^{\simeq}\rto^{\vee q_{i}}
		&\bigvee_{i=0}^{p-2}S^{2n-1-2i}\dto^{\simeq}\\
		\Sigma\mathbb{C}P^{n-1}_{m}\dto^{}\rto^{q}
		&\Sigma\mathbb{C}P^{n-1}_{p-1}\dto^{}\\
		W_{n,m}\rto^{\pi}
		&W_{n,p-1},
\enddiagram\]
\noindent where the vertical homotopy equivalences are due to Corollary \ref{C1} (and we index so that $A_{i}$ has the $(2n-1-2i)$-cell in its top dimension), and the top vertical maps \seqm{A_{i}}{q_{i}}{S^{2n-1-2i}} in the wedge are the quotient maps. 
Using the Hilton-Milnor theorem, $\prod_{i}\Omega A_{i}$ and $\prod_{i}\Omega S^{2n-1-2i}$ are retracts of 
$\Omega(\bigvee_{i}\Omega A_{i})$ and $\Omega(\prod_{i}\Omega S^{2n-1-2i})$, and these retractions are natural with respect to the map $\Omega\vee q_{i}$ (restricting to $\prod \Omega q_{i}$). Thus looping the above diagram one obtains     
\[\diagram
		\prod_{i=0}^{p-2}\Omega A_{i}\dto^{}\rto^{\prod \Omega q_{i}}
		&\prod_{i=0}^{p-2}\Omega S^{2n-1-2i}\dto^{\simeq}\\
		\Omega W_{n,m}\rto^{\Omega\pi}
		&\Omega W_{n,p-1}.
\enddiagram\]

In the stable range $p-1<m\leq (p-1)(p-2)$ and $2m<n$, the second part of Proposition \ref{tJN} implies the multiplication induced by each \seqm{A_{i}}{q_{i}}{S^{2n-1-2i}} on $\pi_{2n-1-2i}$ is equal to the multiplication induced by the projection \seqm{W_{n-i,m-i}}{\pi}{S^{2n-1-2i}}. Hence for each integer $0\leq i<p-1$ we have maps \seqm{S^{2n-1-2i}}{\beta_{i}}{A_{i}} such that each composition \seqmm{S^{2n-1-2i}}{\beta_{i}}{A_{i}}{q_{i}}{S^{2n-1-2i}} is a degree $p^t$ map, where $p^{t}=\max\paren{U_{(p)}(n-i,m-i)\,|\,0\leq i<p-1}$. Since odd spheres are $p$-local $H$-spaces, the loopings of these compositions are $p^t$-power maps. The lemma follows by setting $B=\prod_{0\leq i<p-1}S^{2n-1-2i}$. \end{proof}

\begin{remark}
\label{rProd}
In the proof of Lemma \ref{lE2} we showed $\Omega W_{n,p-1}\simeq \prod_{0\leq i<p-1}\Omega S^{2n-1-2i}$.  
With a similar argument one can show $\Omega W_{n,m}\simeq \prod_{0\leq i<m}\Omega S^{2n-1-2i}$ when $m\leq p-1$, which reproduces
a specific case of a more general result due to Kumpel~\cite{Kumpel}. Thus $\exp_{p}(W_{n,m})=p^{n-1}$ when $m\leq p-1$. 
\end{remark}

We now prove Theorem \ref{MAIN4}.

\begin{proof}[Proof of Theorem \ref{MAIN4}]
Let us first consider the case $0<m<n\leq (p-1)(p-2)$. By Theorem \ref{MAIN1} and Remark \ref{R1} we have the product decomposition $\Omega W_{n,m}\simeq\prod^{p-1}_{i=1}\Omega M(A_{i})$, where each $A_{i}$ is a summand in the wedge decomposition of $\Sigma\mathbb{C}P^{n-1}_{m}$ in Corollary \ref{C1}, and we index so that $A_{i}$ has the $(2(n-m+i)-1)$-cell in its bottom dimension when $i\leq m$, and is trivial if $i>m$. Therefore 
$$exp_{p}(W_{n,m})= max\{exp_{p}(M(A_{i}))|1\leq i\leq p-1\}.$$ 
\noindent Let $t_{i}$ be the number of cells in $A_{i}$. By Lemma \ref{E1} we have the exponent bounds
\begin{equation}
\label{Bound}
exp_{p}(M(A_{i}))\leq p^{n-m+i-1+(t_{i}-1)p}. 
\end{equation}
\noindent We see that this exponent bound is the greatest when $j$ is the integer such that $A_{j}$ has the $(2n-1)$-cell in its top dimension. Therefore $exp_{p}(W_{n,m})\leq p^{n-1+(t_{j}-1)}$. Note that $t_{j}=k$, where $k$ is the number of cells in $\Sigma\mathbb{C}P^{n-1}_{m}$ that are in dimensions of the form $(2n-1-iq)$ for $0\leq i<p-1$. Hence $exp_{p}(W_{n,m})\leq p^{n-1+(k-1)}$.

When $A_{j}$ has a cell in a dimension divisible by $p$, then Lemma~\ref{E1} implies the bound can be improved to $exp_{p}(M(A_{j}))\leq p^{n-1+(t_{j}-2)}$. Still this bound is at least as large as all the bounds in (\ref{Bound}) for $i\neq j$, though possibly no longer strictly as large. Therefore $exp_{p}(W_{n,m})\leq p^{n-1+(t_{j}-2)}=p^{n-1+(k-2)}$ in this case.

For the last case take $2m<n$ and $0<m\leq (p-1)(p-2)$. If $m\leq p-1$, then by Remark \ref{rProd}$\exp_p(W_{n,m})=p^{n-1}$ and we are done. So let us assume $m>p-1$. Note there exists a fibration 
$$\seqmm{W_{n-(p-1),m-(p-1)}}{}{W_{n,m}}{\pi}{W_{n,p-1}}.$$ 
\noindent By Lemma \ref{lE2} there is a space $B$ and a homotopy equivalence \seqm{\Omega W_{n,p-1}}{h}{\Omega B} such that the composition 
$$\seqmmm{\Omega B}{\alpha}{\Omega W_{n,m}}{\Omega\pi}{\Omega W_{n,p-1}}{h}{\Omega B}$$ 
\noindent is a $p^{t}$-power map, and $p^{t}$ is equal to the maximum of the set of James numbers 
$$\paren{U_{(p)}(n-j,m-j)\,|\,0\leq j<p-1}.$$ 
\noindent Since $2m<n$, then $2(m-j)<n-j$, and an upper bound for each of the James numbers in this set are known by Theorem \ref{tJN}. That is,
\begin{equation}
\label{setOfBounds}
U_{(p)}(n-j,m-j)\leq p^{t_{j}-1}
\end{equation}
\noindent where $t_{j}$ is the number of cells in $\Sigma\mathbb{C}P^{n-1-j}_{m-j}$ for dimensions of the form $(2(n-j)-1-iq)$. Therefore the maximum of the bounds in (\ref{setOfBounds}) happens when $j=0$, implying $p^{t}\leq p^{t_{0}-1}$.

Now take the following homotopy commutative diagram of homotopy fibrations 
\begin{equation}
\label{presidentsChoice}
\diagram
		W_{n-(p-1),m-(p-1)}\rto^{}\dto^{\ell}
		&W_{n,m}\ddouble\rto^{\pi}
		&W_{n,p-1}\dto^{h}\\
		F\rto^{}
		&W_{n,m}\rto^{f}
		&B
\enddiagram
\end{equation}
\noindent where the map $f$ is the composition $h\circ\pi$, and $F$ is the homotopy fibre of $f$. Since the middle and right vertical maps are homotopy equivalences, the lift $\ell$ is also a homotopy equivalence by the $5$-lemma. Now applying Proposition \ref{MV} to the bottom fibration, and using the homotopy equivalences in (\ref{presidentsChoice}), we obtain the bound
\begin{equation}
\label{expbound2}
\exp_{p}(W_{n,m})\leq p^{t_{0}-1}\cdot \max(\exp_{p}(W_{n-(p-1),m-(p-1)}), \exp_{p}(W_{n,p-1})).
\end{equation}

Repeat the above argument to get bounds 
\begin{equation}
\label{expbound3}
\exp_{p}(W_{n-j(p-1),m-j(p-1)})\leq p^{t_{0,j}-1}\cdot \max(\exp_{p}(W_{n-(j+1)(p-1),m-(j+1)(p-1)}), \exp_{p}(W_{n-j(p-1),p-1}))
\end{equation}
\noindent where $m-(j+1)(p-1)>0$ and $t_{0,j}$ is the number of cells in $\Sigma\mathbb{C}P^{n-1-j(p-1)}_{m-j(p-1)}$ in dimensions of the form $(2(n-j(p-1))-1-iq)$. Note $t_{0,0}=t_{0}$ and $t_{0,j+1}<t_{0,j}=t_{0,j+1}+1$. By Remark \ref{rProd} we have
$$\exp_{p}(W_{n-j(p-1),p-1})=p^{n-1-j(p-1)}.$$

\noindent We induct on the bound in (\ref{expbound3}) starting with the base case $j=t_{0}-1$, where $0<m-(t_{0}-1)(p-1)\leq p-1$, and then apply Remark \ref{rProd}. The inductive assumption is 
$$\exp_{p}(W_{n-(j+1)(p-1),m-(j+1)(p-1)})\leq p^{n-1-(j+1)(p-1)+(t_{0,j+1}-1)}.$$ 

\noindent Since $0<m\leq (p-1)(p-2)$, $t_{0,j+1}<t_{0}\leq p-2$, and so 
$$\exp_{p}(W_{n-(j+1)(p-1),m-(j+1)(p-1)})< \exp_{p}(W_{n-j(p-1),p-1}).$$ 

\noindent Then using the bound in (\ref{expbound3})
$$\exp_{p}(W_{n-j(p-1),m-j(p-1)})\leq p^{t_{0,j}-1}\cdot \exp_{p}(W_{n-j(p-1),p-1})=p^{n-1-j(p-1)+(t_{0,j}-1)}.$$ 

\noindent Therefore by induction
$$\exp_{p}(W_{n,m})\leq p^{n-1+(t_{0}-1)}.$$ 

\end{proof}

We finish off by giving analogous exponent bounds for real and symplectic Stiefel manifolds. These follow from the decompositions in Theorems~\ref{MAIN2} and~\ref{MAIN3}, and the same argument used to prove Theorem~\ref{MAIN4}.

\begin{theorem}
\label{MAIN5}
Fix $0<m<n\leq (p-1)(p-2)+1$ and let $n$ be odd. Let $k$ be the number of cells in $\Sigma\mathbb{C}P^{n-2}_{m}$ that are in dimensions of the form $(2n-3-iq)$ for $0\leq i<p-1$. Then
$$exp_{p}(V_{n,m})\leq p^{n-2+(k-1)}$$
\noindent and
$$exp_{p}(V_{n+1,m})\leq p^{n-2+(k-1)}.$$

Furthermore, if $k>1$ and there exists a cell of dimension $(2n-3-iq)$ in $\Sigma\mathbb{C}P^{n-2}_{m}$ such that $i>0$ and $(2n-3-iq)$ is divisible by $p$, then 
$$exp_{p}(V_{n,m})\leq p^{n-2+(k-2)}$$
\noindent and
$$exp_{p}(V_{n+1,m})\leq p^{n-2+(k-2)}.$$
$~\qqed$
\end{theorem}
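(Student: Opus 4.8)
The plan is to mimic the proof of Theorem~\ref{MAIN4} almost verbatim, transporting it across the Harris-type equivalences that relate $SO$ to $SU$. First I would invoke Theorem~\ref{MAIN2} to get the product decomposition $\Omega V_{n,m}\simeq X'\times\Omega X\times\prod_{i=1}^{r}\Omega\mathcal{D}_i$, where $r=\floor{\frac{p-1}{2}}$ and $\mathcal{D}_i=D_{2i-1}=M(\bar A_{2i-1})$ with $\bar A_{2i-1}$ a summand of $\Sigma\mathbb{C}P^{n-1-1}_{m}=\Sigma\mathbb{C}P^{n-2}_{m}$ in the Corollary~\ref{C1} splitting (note the index shift by one, since the real case passes through $SU(n-1)$ and $SU(n-m-1)$). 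Since $X'$ is either a point or $S^{n-m-1}$ and $X$ is either a point or $S^{n-1}$, their contribution to $\exp_p$ is at most $p^{\max(\floor{(n-m-2)/2},\,(n-2)/2)}\le p^{n-2}$, which is dominated by the contribution coming from the odd $M$-factors. Therefore $\exp_p(V_{n,m})\le\max\{\exp_p(M(\bar A_{2i-1}))\mid 1\le i\le r\}$ up to the harmless sphere factors, and likewise for $V_{n+1,m}$ since $\Omega V_{n+1,m}$ has the same $\mathcal{D}_i$ factors (only the $X$ factor changes, now from a point to $S^{n}$, contributing $p^{(n-1)/2}\le p^{n-2}$).

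Next I would apply Lemma~\ref{E1} to each odd summand. With $\bar A_{2i-1}$ having bottom cell in dimension $2r_i+1$ (where in the relevant range this is $2(n-m+2i-1)-1$, or rather the shifted version for $\mathbb{C}P^{n-2}_m$) and $t_i$ cells, Lemma~\ref{E1} gives $\exp_p(M(\bar A_{2i-1}))\le p^{r_i+(t_i-1)p}$, improved to $p^{r_i+(t_i-1)p-1}$ when there is a cell in a dimension divisible by $p$. As in the proof of Theorem~\ref{MAIN4}, this bound is maximized by the summand whose top cell sits in the top dimension $2n-3$ of $\Sigma\mathbb{C}P^{n-2}_m$ (recall the real Stiefel manifold $V_{n,m}$ for $n$ odd is built from $SU(n-1)$, whose top cell is in dimension $2(n-1)-1=2n-3$). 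Writing $k$ for the number of cells of $\Sigma\mathbb{C}P^{n-2}_m$ in dimensions $(2n-3-iq)$, $0\le i<p-1$, the maximizing summand has $k$ cells and bottom cell in dimension $2n-3-(k-1)q$, so that $r+(t-1)p$ with $t=k$ and $2r+1=2n-3-(k-1)q$ evaluates, after the same arithmetic simplification used in Lemma~\ref{E1}'s induction ($k\le p-2$ forces $r+k(p-1)$ to dominate), to $n-2+(k-1)$. This yields $\exp_p(V_{n,m})\le p^{n-2+(k-1)}$, and the divisibility hypothesis upgrades $t-1$ to $t-2$ by Lemma~\ref{E1}, giving $p^{n-2+(k-2)}$; one checks as in Theorem~\ref{MAIN4} that this improved bound still dominates the bounds for all the other summands $i\ne j$.

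Finally, for $V_{n+1,m}$ I would observe that Theorem~\ref{MAIN2} applied with $n+1$ in place of $n$ (legitimate since $n+1\le(p-1)(p-2)+1$ when $n\le(p-1)(p-2)$, and $n+1$ is even so $X=S^{n}$) produces the \emph{same} $H$-spaces $\mathcal{D}_i=D_{2i-1}$ — because these depend only on the odd summands of $SU(n-1)$, which is the relevant special unitary group for both $V_{n,m}$ (n odd) and $V_{n+1,m}$ (n+1 even, so it passes through $SU((n+1)-1)=SU(n)$... here I need to be careful, let me re-examine: for $n+1$ even the decomposition~\eqref{eDecomp} peels off $S^{n}$ and leaves $SO(n)$ with $n$ odd, which is the retract of $SU(n-1)$, the same group as for $V_{n,m}$). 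Hence the $\Omega\mathcal{D}_i$ factors are identical and the sphere factor $S^{n}$ contributes only $p^{(n-1)/2}$, so the same bound $p^{n-2+(k-1)}$ (resp.\ $p^{n-2+(k-2)}$) holds.

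The main obstacle I anticipate is bookkeeping the dimensional index shift correctly: the real case routes through $SU(n-1)$ rather than $SU(n)$, so all the projective-space summands live in $\Sigma\mathbb{C}P^{n-2}_m$ and the ``$n-1$'' in the complex exponent formula becomes ``$n-2$''; simultaneously one must confirm that the sphere factors $X',X$ (and, in the $V_{n+1,m}$ case, the larger sphere $S^{n}$) never exceed the $M$-factor bound — this requires only the crude estimate $\exp_p(S^{2a+1})=p^a$ with $a\le n-2$, but it must be stated. The rest is a routine transcription of the Theorem~\ref{MAIN4} argument, now not even needing the Proposition~\ref{MV} stable-range machinery since we have the genuine loop-space decomposition from Theorem~\ref{MAIN2} throughout the range $0<m<n\le(p-1)(p-2)+1$.
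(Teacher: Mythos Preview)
Your proposal is correct and takes essentially the same approach as the paper, which merely states that the result follows from Theorem~\ref{MAIN2} by the same argument used to prove Theorem~\ref{MAIN4}. Your identification of the index shift ($SU(n-1)$ in place of $SU(n)$, hence $\Sigma\mathbb{C}P^{n-2}_m$ and the exponent $n-2$) and your check that the sphere factors $X'$, $\Omega X$ contribute at most $p^{n-2}$ are precisely the additional bookkeeping the paper leaves implicit.
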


\begin{theorem}
\label{MAIN6}
Fix $0<j<k\leq \frac{1}{2}(p-1)(p-2)$ and let $n=2k+1$ and $m=2j$. Let $k$ be the number of cells in $\Sigma\mathbb{C}P^{n-2}_{m}$ that are in dimensions of the form $(2n-3-iq)$ for $0\leq i<p-1$. Then
$$exp_{p}(X_{k,j})\leq p^{n-2+(k-1)}.$$

Furthermore, if $k>1$ and there exists a cell of dimension $(2n-3-iq)$ in $\Sigma\mathbb{C}P^{n-2}_{m}$ such that $i>0$ and $(2n-3-iq)$ is divisible by $p$, then 
$$exp_{p}(X_{k,j})\leq p^{n-2+(k-2)}.$$
$~\qqed$
\end{theorem}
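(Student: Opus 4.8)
The plan is to mirror the first case of the proof of Theorem~\ref{MAIN4} (the range $0<m<n\leq (p-1)(p-2)$), now feeding in the decomposition of $\Omega X_{k,j}$ provided by Theorem~\ref{MAIN3}. Set $n=2k+1$ and $m=2j$, so that $n-2=2k-1$ and $2n-3=4k-1$, and recall $r=\floor{\frac{p-1}{2}}$. By Theorem~\ref{MAIN3} there is a product decomposition $\Omega X_{k,j}\simeq\prod_{i=1}^{r}\Omega\mathcal{D}_{i}$, where each $\mathcal{D}_{i}=D_{2i-1}$ is one of the $H$-spaces from Theorem~\ref{T3}; tracing through that construction (which for $n$ and $n-m$ both odd applies Theorem~\ref{T2} to $SU(n-1)$ and $SU(n-m-1)$) together with Remark~\ref{R1} identifies each $D_{2i-1}$ as $M(\bar{A}_{2i-1})$ for a summand $\bar{A}_{2i-1}$ in the wedge decomposition of $\Sigma\mathbb{C}P^{n-2}_{m}$ from Corollary~\ref{C1}. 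Hence $\exp_{p}(X_{k,j})=\max\{\exp_{p}(M(\bar{A}_{2i-1}))\mid 1\leq i\leq r\}$, and everything reduces to estimating these factors.

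First I would bound each $\exp_{p}(M(\bar{A}_{2i-1}))$ by Lemma~\ref{E1}: if $\bar{A}_{2i-1}$ has bottom cell in dimension $2r_{i}+1$ and $t_{i}$ cells then $\exp_{p}(M(\bar{A}_{2i-1}))\leq p^{r_{i}+(t_{i}-1)p}$, with the exponent lowered by one when the numerical condition of Lemma~\ref{E1} holds, which—exactly as in Lemma~\ref{tJN} and the proof of Theorem~\ref{MAIN4}—corresponds to $\bar{A}_{2i-1}$ containing a cell in a dimension divisible by $p$. Rewriting $r_{i}+(t_{i}-1)p$ in terms of the top cell dimension $2r_{i}+1+(t_{i}-1)q$ of $\bar{A}_{2i-1}$ shows, by the same comparison made for $W_{n,m}$ in Theorem~\ref{MAIN4}, that this estimate is maximal for the summand whose top cell sits in dimension $2n-3=4k-1$. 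That summand does occur among the $\bar{A}_{2i-1}$: since $\hlgy{*}{X_{k,j}}\cong\hlgy{*}{SO(2k+1)/SO(2j+1)}$ carries a generator in degree $4k-1$, some $\mathcal{D}_{i}$ must account for it, and by construction this summand has exactly $k$ cells, where $k$ denotes the number of cells of $\Sigma\mathbb{C}P^{n-2}_{m}$ in dimensions of the form $(2n-3-iq)$ with $0\leq i<p-1$. For it, $t_{i}=k$ and $r_{i}=\tfrac12\bigl(4k-1-(k-1)q-1\bigr)$, so Lemma~\ref{E1} gives $\exp_{p}(M(\bar{A}_{2i-1}))\leq p^{n-2+(k-1)}$, and since this dominates the bounds for the remaining factors we obtain $\exp_{p}(X_{k,j})\leq p^{n-2+(k-1)}$.

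For the sharper statement, suppose $k>1$ and some cell of $\Sigma\mathbb{C}P^{n-2}_{m}$ lies in a dimension $(2n-3-iq)$ with $i>0$ divisible by $p$; that cell belongs to the distinguished summand, so the improved clause of Lemma~\ref{E1} yields $\exp_{p}(M(\bar{A}_{2i-1}))\leq p^{n-2+(k-2)}$ for it, and—again as in the proof of Theorem~\ref{MAIN4}—this improved estimate remains at least as large as the bounds for all other factors, giving $\exp_{p}(X_{k,j})\leq p^{n-2+(k-2)}$. The main work, and the only real obstacle, is the bookkeeping: one must verify that the summand realizing dimension $4k-1$ is genuinely one of the odd-indexed pieces $\bar{A}_{2i-1}$ kept by Theorem~\ref{T3}, that it has fewer than $p-1$ cells so Lemma~\ref{E1} is applicable (which follows from $k\leq\tfrac12(p-1)(p-2)$, hence $2j<(p-1)^{2}$), and that among the estimates $p^{r_{i}+(t_{i}-1)p}$ this summand's is the largest—a computation identical to the one already carried out for $W_{n,m}$.
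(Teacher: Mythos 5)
Your proposal is correct and follows essentially the same route as the paper: the paper states Theorem~\ref{MAIN6} without a separate proof, noting only that it follows from the decomposition of $\Omega X_{k,j}$ in Theorem~\ref{MAIN3} together with the argument used for Theorem~\ref{MAIN4}, which is precisely what you carry out (identifying the factors $\mathcal{D}_i=M(\bar{A}_{2i-1})$ with summands of $\Sigma\mathbb{C}P^{n-2}_{m}$ and applying Lemma~\ref{E1} and Proposition~\ref{tJN}). Your bookkeeping—checking the top-cell summand is odd-indexed, has at most $p-2$ cells, and dominates the other bounds—is the intended verification.
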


\bibliographystyle{amsplain}
\bibliography{stiefel-exp}

\end{document}